\CompileMatrices\SelectTips{cm}{12}
\theoremstyle{plain}
\newtheorem{Thm}{\sc Theorem}[section]
\newtheorem{theorem}[Thm]{\sc Theorem}
\newtheorem{corollary}[Thm]{\sc Corollary}
\newtheorem*{corollary*}{\sc Corollary}
\newtheorem{proposition}[Thm]{\sc Proposition}
\newtheorem*{proposition*}{\sc Proposition}
\newtheorem{lemma}[Thm]{\sc Lemma}
\theoremstyle{remark}
\newtheorem{remark}[Thm]{Remark}
\newtheorem{example}[Thm]{Example}
\newtheorem*{example*}{Example}
\newtheorem*{remark*}{Remark}
\newcommand{\cO}{{\mathcal O}}
\renewcommand{\AA}{{\mathbb A}}
\newcommand{\FF}{{\mathbb F}}
\newcommand{\NN}{{\mathbb N}}
\newcommand{\PP}{{\mathbb P}}
\newcommand{\QQ}{{\mathbb Q}}
\newcommand{\ZZ}{{\mathbb Z}}
\newcommand{\Fr}{{\mathop{\rm Fr}}}
\newcommand{\Spec}{\mathop{\rm Spec \, }}
\newcommand{\Hom}{{\mathop{{\rm Hom}}}}
\newcommand{\cHom}{{\mathop{{\cal H}om}}}
\renewcommand{\mod}{\mathop{\rm mod}}
\begin{document}

\markboth{\rm A.\ Langer}{\rm Lifting zero-dimensional schemes and divided powers}

\title{Lifting zero-dimensional schemes and divided powers}
\author{Adrian Langer}

\date{\today}

\maketitle

{\sc Address:}\\
Institute of Mathematics, University of Warsaw,
ul.\ Banacha 2, 02-097 Warszawa, Poland\\
e-mail: {\tt alan@mimuw.edu.pl}

\medskip

\begin{abstract}
  We study divided power structures on finitely generated
  $k$-algebras, where $k$ is a field of positive characteristic $p$.
  As an application we show examples of $0$-dimensional Gorenstein
  $k$-schemes that do not lift to a fixed noetherian local ring of
  non-equal characteristic. We also show that Frobenius neighbourhoods
  of a singular point of a general hypersurface of large dimension
  have no liftings to mildly ramified rings of non-equal
  characteristic.
\end{abstract}

{2010 \emph{Mathematics Subject Classification.} Primary 13D10; Secondary 13C40, 14B07}

\let\thefootnote\relax\footnote{Author's work was partially supported by
Polish National Science Centre (NCN) contract number 2015/17/B/ST1/02634.}

\section*{Introduction}

It is well-known that smooth projective curves defined over an
algebraically closed field of positive characteristic can be lifted to
characteristic zero. This is no longer the case for higher dimensional
projective varieties. However, every smooth scheme defined in positive
characteristic can be locally lifted to characteristic zero. Such
local lifting properties hold also for all locally complete
intersections as affine complete intersections are unobstructed.
Unfortunately, the lifting property does not hold for affine schemes
and in fact R. Vakil in \cite[Theorem 1.1, M7]{Va} shows that the
versal deformation spaces of isolated normal Cohen--Macaulay threefold
singularities satisfy Murphy's law, i.e., every singularity type of
finite type over $\ZZ$ appears on these spaces. This result depends on
Schlessinger's theorem, which says that under some mild assumptions
every deformation of a cone over a normal projective variety $X$ of
dimension $\ge 2$ is a cone over a deformation of $X$. If we take as $X$
a smooth projective surface that does not lift to characteristic zero,
then we obtain examples of singularities that do not lift to
characteristic zero. Clearly, this method does not work for lower
dimensional singularities and hence one could still hope for the local
lifting property for low dimensional schemes satisfying some nice
properties like being Cohen-Macaulay or Gorenstein. In fact, it is a
well-known problem whether there exist nonliftable zero-dimensional
schemes or nonliftable singular curves (see, e.g., \cite[p.~148]{Ha}
or \cite[Problem 1.2]{Am}). Although we cannot answer this question in
full generality, we show the following theorem (see Corollary
\ref{main-corollary}):

\begin{theorem}\label{unliftable-theorem}
  Let $R$ be a noetherian local ring with residue field $k$ of characteristic
  $p$. If $pR\ne 0$ then there exist $0$-dimensional Gorenstein
  $k$-schemes that are not liftable to $R$.
\end{theorem}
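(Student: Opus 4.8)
The plan is to realise the desired schemes as spectra of Artinian Gorenstein local $k$-algebras produced by Macaulay's inverse-system construction, and to obstruct their lifting by means of the divided-power theory developed above. Recall that an Artinian Gorenstein quotient $A=S/J$ of $S=k[x_1,\dots,x_n]$ is the same datum as a single element $F$ of the divided-power algebra $\Gamma=\Gamma_k(X_1,\dots,X_n)$: taking $J=\mathrm{Ann}(F)$ for the contraction action $x^{\beta}\cdot X^{[\alpha]}=X^{[\alpha-\beta]}$ (read as $0$ when some exponent turns negative), one has $A\cong S\cdot F$ as $S$-modules, $\dim_k A=\dim_k(S\cdot F)$, and the socle corresponds to the top-degree part of $F$. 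First I would make this dictionary $R$-linear. Since $\Gamma_R$ is the graded $R$-dual of $R[x_1,\dots,x_n]$, a finite flat Gorenstein $R$-algebra $\tilde A$ lifting $A$ is, by relative Matlis duality, cut out from $R[x_1,\dots,x_n]$ as the annihilator of a single $\tilde F\in\Gamma_R$, with the cyclic module $M=R[x]\cdot\tilde F\cong\tilde A$ free over $R$ of rank $\dim_k A$ and reducing to $S\cdot F$ modulo $\mathfrak m_R$.

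Granting this reformulation, the obstruction becomes the phenomenon isolated earlier. When $pR\neq 0$ the algebra $\Gamma_R$ carries genuine $p$-divisibility through the divided-power relations $X^{[j]}X^{[m-j]}=\binom{m}{j}X^{[m]}$, whose coefficients fall into $pR$ as soon as $m\ge p$; the same $p$ governs the Frobenius comparison $(\tilde a+\tilde b)^{p}-\tilde a^{p}-\tilde b^{p}\in pR[x]$ that any lift must obey, which is precisely the datum of a divided-power structure on the lift. I would then choose $F$ to be a compressed, non-complete-intersection inverse system essentially involving divided powers $X_i^{[\alpha_i]}$ of order $\alpha_i\ge p$ in $n\gg 0$ variables, and show that every lift $\tilde F\in\Gamma_R$ of $F$ forces $M$ to contain a nonzero element lying in $p\Gamma_R$ but not in $pM$. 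Such an element is annihilated by $p$ and yet is not a $p$-multiple inside $M$, so $M\cong\tilde A$ cannot be free, and as finite flat modules over a local ring are free this means $\tilde A$ is not flat; hence no lifting of $A$ to $R$ exists. The non-complete-intersection hypothesis is indispensable, since affine complete intersections are unobstructed and always lift, as recalled in the introduction.

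The principal obstacle is this second step: proving that the offending $p$-torsion element is unavoidable for a suitably chosen $F$, uniformly over all lifts $\tilde F$. This is exactly where the hypothesis $pR\neq 0$ is used; when $pR=0$ the ring $R$ is an $\FF_p$-algebra, the element $p$ acts as zero, the divided-power relations above carry no $p$-divisibility content, and the construction correctly collapses, in agreement with the existence of equal-characteristic liftings. After fixing how large $n$ and the divided-power orders must be, and verifying that the resulting $A$ is genuinely Gorenstein, one obtains a $0$-dimensional Gorenstein $k$-scheme admitting no lifting to $R$, which is the content of Corollary~\ref{main-corollary}.
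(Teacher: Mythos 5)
Your inverse-system framing is an attractive and genuinely different way to manufacture the Gorenstein property: the paper instead starts from the non-Gorenstein ring $A_0=k[x_1,\dots,x_n]/(x_1^q,\dots,x_n^q,f_0)$ and passes to the linked scheme $\Spec B_0/(0:_{B_0}f_0)$ inside the complete intersection $B_0=k[x_1,\dots,x_n]/(x_1^q,\dots,x_n^q)$, using linkage (Proposition \ref{non-liftable-Gorenstein}) to transfer non-liftability to a Gorenstein quotient. Your Macaulay-dual route would avoid linkage entirely if it worked. But as written the proposal has a genuine gap, and it is exactly the one you flag yourself: you never exhibit a concrete $F$, never compute an obstruction, and never prove that every lift $\tilde F$ produces the offending $p$-torsion element. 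That step is the entire mathematical content of the theorem. In the paper it is carried by Theorem \ref{lift-criterion2} (if $A_0$ lifts to $R$ with $pR\ne 0$, $pm_R=0$, $m_R^{e+1}=0$, then $w_p(f_0^{p^{r-1}})$ must lie in $(x_1^{i_1},\dots,x_n^{i_n})+I$) together with Lemma \ref{checking-lemma}, an explicit coefficient computation via Lucas's theorem and Vandermonde's identity showing that for $f_0=x_1\cdots x_q+x_{q+1}\cdots x_{2q}+x_{2q+1}\cdots x_{3q}$ in $n=3q$ variables this containment fails. Nothing in your proposal replaces that computation.

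Two further points. First, the example must be calibrated to the ramification of $R$: the paper replaces $R$ by $R/pm_R$ and uses Krull's intersection theorem to find the least $e$ with $m_R^{e}\subset pR$, and the construction then takes $q=p^r$ large relative to $e$ and $n=3q$ variables. Your ``$n\gg 0$'' gives no mechanism tying the size of the example to $R$, so the claim that the obstruction is unavoidable ``uniformly over all lifts'' has no quantitative content. Second, the identification of a flat Gorenstein lifting with a single $\tilde F\in\Gamma_R$ by relative Matlis duality is itself a nontrivial assertion needing proof, and the statement that the congruence $(\tilde a+\tilde b)^p-\tilde a^p-\tilde b^p\in pR[x_1,\dots,x_n]$ ``is precisely the datum of a divided-power structure on the lift'' is not correct as stated: the paper is explicit that for $m_R^{e+1}=0$ with $e>1$ liftability is \emph{not} directly equivalent to the existence of a divided power structure, which is why Theorem \ref{lift-criterion2} is formulated as a numerical criterion on $w_p(f_0^{p^{r-1}})$ rather than as a divided-power statement, and why the divided-power reduction (Corollary \ref{lift-criterion}) is only available for $e\le p-1$.
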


The constructed schemes depend on $R$ (more precisely, they depend
only on the smallest $e$ such that $m_R^e\subset pR$). So in principle
these schemes could be liftable to characteristic zero over some more
ramified rings but we are unable to check whether this really happens.
As a substitute we can find a direct system $\{ X_n\} _{n\in \NN}$ of
$0$-dimensional $k$-schemes such that for every noetherian local ring $R$ with
residue field $k$ and $pR\ne 0$ the schemes $X_n$ do not lift to $R$
for large $n$ (see Corollary \ref{direct-system}).

Let $k$ be an algebraically closed field of characteristic $p>0$.
If $X$ is a $k$-variety, the \emph{Frobenius neighbourhood} of a $k$-point $x\in X$
is the subscheme $(\Fr _X)^{-1}(x)\subset X$, where $\Fr_X:X\to X$ is the absolute Frobenius morphism. Set-theoretically it is equal to $x$ but its ideal sheaf in $\cO_X$ 
equals to $m_x^p\cdot \cO_X$.  For $r\ge 1$ an \emph{$r$th Frobenius neighbourhood} of $x\in X$ is defined similarly with  the Frobenius morphism $\Fr_X$ being replaced by $(\Fr_X)^r$.

We construct our examples by linkage from high Frobenius
neighbourhoods of a singular point of a hypersurface. Unfortunately,
deformations of Frobenius neighbourhoods of a vertex of a cone
over a projective variety $X$ are difficult to control and they
are not easily related to deformations of $X$. Still these
neighbourhoods seem to be ``less liftable'' than the original
variety, so it is an interesting question if they give unliftable
schemes for examples considered by Vakil.

The basic tools that we use are elementary deformation theory and
divided power algebra. In the simplest case of rings with small ramification (e.g., $R=W_2(k)$) 
existence of lifting is related to the study of divided power structures on 
an ideal of a ring of positive characteristic and we show a simple
criterion that allows us to check its existence. In this case one can use  Koblitz's  example  
\cite[Example 3.2.4]{BO} to get a non-liftable example (see below for more details). Lifting to other rings is more complicated and although it is not directly related to existence of divided power structures we can still give a numerical criterion that in some cases allows us to check that  lifting does not exist (see Theorem \ref{lift-criterion2}).

We show a criterion allowing us to check when higher Frobenius neighbourhoods
of singular points lift to some local rings with $pR\ne 0$ (see Theorem
\ref{lift-criterion2}). As a corollary we show the following
theorem (see Corollary \ref{generic-PD-structure} and Corollary
\ref{general-lifting}):

\begin{theorem}\label{main-generic}
Let $k$ be an algebraically closed field of characteristic $p>0$.
\begin{enumerate}
\item
The first Frobenius neighbourhood of a singular point of a general
hypersurface in $\AA ^n$, where $n\ge 6$, does not have a divided
power structure and it does not lift to $W_2(k)$.
\item Let
$X\subset \AA ^n_k$ be a general hypersurface with multiplicity
$\ge q=p^r$ at $0$. If $n\ge 3q$ then the $r$-th Frobenius
neighbourhood of $0\in X$ is not liftable to any local ring $R$
with residue field $k$ and such that $pR\ne 0$ and $m_R^{q}=0$.
\end{enumerate}
\end{theorem}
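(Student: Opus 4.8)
The plan is to realise both statements as non-vanishing results for a single obstruction attached to the Frobenius neighbourhood, and then to reduce each to a genericity property of the defining equation. Set $q=p$ in part (1) and $q=p^r$ in part (2), let $m=(x_1,\dots,x_n)$, and write $C=k[x_1,\dots,x_n]/(x_1^q,\dots,x_n^q)$ for the coordinate ring of the $r$th Frobenius neighbourhood of a \emph{smooth} point: it is an Artinian Gorenstein complete intersection with one-dimensional socle in degree $n(q-1)$. If $f$ is the hypersurface equation (with $f\in m^2$ in part (1) and $f\in m^q$ in part (2)), then the $r$th Frobenius neighbourhood of $0\in X$ has coordinate ring $B=C/fC$, where I still write $f$ for its image in $C$. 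Being a complete intersection, $C$ lifts and $m_C$ carries divided powers, so the obstruction to lifting $B$ is measured by whether this structure survives on the quotient, i.e. whether $fC$ is a sub-PD-ideal. The first step is thus to record, from the divided-power criterion and its relation to lifting to $W_2(k)$ (part (1)) and from Theorem \ref{lift-criterion2} (part (2)), that everything reduces to the membership of the primary obstruction $\gamma_q(f)$ in the ideal $fC\subset C$.

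Second, I would make $\gamma_q(f)$ explicit. Expanding the divided power of a sum and using $\gamma_q(x^a)=(x^a/x_i)^q\gamma_q(x_i)=0$ in $C$ for $|a|\ge 2$, one obtains a determined element
\[
\gamma_q(f)=\sum_{\sum_a n_a=q}\ \prod_a c_a^{\,n_a}\,\gamma_{n_a}(x^a),\qquad f=\sum_a c_a x^a,
\]
whose lowest-degree part sits in degree $q\cdot(\mathrm{mult}_0 f)$. In part (1), where $q=p$, every $\gamma_{n_a}$ that occurs has $n_a<p$, so $\gamma_{n_a}(x^a)=x^{n_a a}/n_a!$ is canonical and \emph{no} free parameter $\gamma_p(x_i)$ enters; hence $\gamma_p(f)$ is the same for every divided power structure on $m_C$, and $m_B$ admits a divided power structure if and only if this fixed element of degree $2p$ lies in $fC$. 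In part (2) intermediate powers $p\le n_a<q$ do occur, so the free parameters $\gamma_{p^j}(x_i)$ re-enter; this is precisely why lifting there is no longer equivalent to existence of a divided power structure, and why one must instead feed the pair $(\gamma_q(f),\ \cdot f\colon C\to C)$ into the numerical criterion of Theorem \ref{lift-criterion2}.

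Third---and this is where the real work and the main obstacle lie---I would carry out the genericity estimate forcing non-membership for general $f$. The locus of $f$ with $\gamma_q(f)\in fC$ is the vanishing locus of the image of $\gamma_q(f)$ in $\coker(\cdot f\colon C\to C)$, an algebraic condition on the coefficients $c_a$; so it suffices to exhibit one $f$ of the prescribed multiplicity with $\gamma_q(f)\notin fC$ and to conclude by semicontinuity. Concretely I would choose $f$ with a convenient leading form and compare, in the relevant degree, the determined element $\gamma_q(f)$ with the homogeneous piece $f_{\mathrm{lead}}\cdot C$. For part (2) the degree of $\gamma_q(f)$ is about $q^2$, and the hypothesis $n\ge 3q$ is what makes the socle degree $n(q-1)$ large enough that this degree-$q^2$ element is both non-zero in $C$ and not divisible by a general leading form of degree $q$; for part (1) the analogous comparison for a general quadric, carried out uniformly in $p$, yields the threshold $n\ge 6$.

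Finally I would assemble the implications. In part (1), $\gamma_p(f)\notin fC$ shows directly that $m_B$ has no divided power structure, and by the cited equivalence $B$ does not lift to $W_2(k)$. In part (2), the same non-membership makes the numerical obstruction of Theorem \ref{lift-criterion2} non-zero, so $B$ lifts to no local ring $R$ with residue field $k$, $pR\ne 0$ and $m_R^q=0$. The delicate points, on which I expect to spend most effort, are the sharp dimension counts producing exactly $n\ge 6$ and $n\ge 3q$, and---in part (2)---controlling the free parameters $\gamma_{p^j}(x_i)$ inside the numerical criterion so that they cannot be tuned to absorb the obstruction.
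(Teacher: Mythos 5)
Your overall strategy coincides with the paper's: the membership condition is rewritten as the vanishing of $M(f):=(p-1)!\,f^{q-1}w_p(f^{p^{r-1}})$ modulo $(x_1^q,\dots,x_n^q)$, which is polynomial in the coefficients of $f$ and hence cuts out a closed locus in the parameter space of hypersurfaces of the given multiplicity; one then exhibits a single $f$ outside this locus and invokes genericity, feeding the conclusion into Proposition \ref{D.P.-criterion} and Corollary \ref{lift-criterion} for part (1) and into Theorem \ref{lift-criterion2} for part (2). So the reduction steps you describe are sound and are exactly what the paper does.

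The genuine gap is that you never produce the witness $f$ nor verify the non-vanishing, and the mechanism you propose for doing so would not work. Comparing the degree of the obstruction class with the socle degree $n(q-1)$ of $C$ cannot establish that a \emph{specific} element is nonzero in $C/fC$ (lying below the socle degree is necessary, not sufficient), and this heuristic even predicts the wrong threshold: the relevant element $f^{q-1}w_p(f^{p^{r-1}})$ has degree $q(2q-1)$ for $\deg f=q$, so the socle bound would suggest $n\gtrsim 2q$, not $3q$. The actual source of $n\ge 3q$ is the explicit choice $f_0=x_1\cdots x_q+x_{q+1}\cdots x_{2q}+x_{2q+1}\cdots x_{3q}$ (and $x_1x_2+x_3x_4+x_5x_6$, resp.\ $x_1^2+x_2x_3+x_4x_5$, in part (1)): three monomials in \emph{disjoint} sets of $q$ variables, chosen so that the witness monomial $y_1^{q-p^{r-1}}y_2^{q-p^{r-1}}y_3^{2p^{r-1}-1}$ contains every variable to degree $<q$ and hence survives in $C$. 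Showing that its coefficient in $M(f_0)$ is nonzero is the real content of the proof (Lemma \ref{checking-lemma}): a computation with Lucas's theorem and Vandermonde's identity yielding the value $\binom{2p-1}{p}\equiv 1 \bmod p$. Without this, nothing has been proved. A smaller point: for $r>1$ your worry about tuning the free parameters $\gamma_{p^j}(x_i)$ does not arise, since the obstruction supplied by Theorem \ref{lift-criterion2} is the parameter-free membership $w_p(f^{p^{r-1}})\in(x_1^q,\dots,x_n^q,f)$ rather than a statement about $\gamma_q(f)$ for some divided power structure.
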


The only previous results related to Theorems
\ref{unliftable-theorem} and \ref{main-generic} are folklore.
Namely, it was known that if $k$ is perfect then there exists a
zero-dimensional $k$-scheme which does not lift to ring $W_2(k)$
of Witt vectors of length at most $2$. In fact, in \cite[Example
3.2.4]{BO} the authors state (without proof) N. Koblitz's example
of an ideal $J$ in a characteristic $p$ ring  with $J^{(p)}=0$
and with no divided power structure. The corresponding ring 
was known to have no lifting to $W_2 (k)$. The author learnt this
fact from B. Bhatt, who learnt it from J. de Jong. The proof was
published in \cite[Proposition 3.4]{Zd} by the author's student,
M. Zdanowicz (who learnt the fact from the author).
In characteristic $2$ we show that although this $0$-dimensional $k$-scheme 
does not lift to $W_2(k)$, it lifts to a discrete valuation ring of characteristic 
zero with absolute ramification $2$ (see Example \ref{Koblitz}). 
So in general one cannot expect that schemes from Theorem \ref{main-generic}
do not lift to characteristic zero. This explains why Frobenius neighbourhoods of Vakil's examples seem more likely to produce non-liftable examples of $0$-dimensional schemes.

\medskip

The structure of the article is as follows. In Section 1 we recall and
state a few preliminary results. In Section 2 we study divided power
structure on ideals close to Frobenius neighbourhoods. Then in Section
3 we prove the main technical criterion that allows to check
liftability of zero-dimensional rings. In Section 4 we apply these
results to obtain $0$-dimensional schemes that are not liftable to a
fixed ring. In Section 5 we show how to change these examples to
obtain Gorenstein schemes.

\section{Preliminaries}

\subsection{Simple lifting results}

Let us recall the following well-known lemma (see, e.g.,  \cite[Corollary to Theorem 22.5]{Ma}):

\begin{lemma} \label{Matsumura}
  Let $R\to S$ be a flat and local ring homomorphism of
  noetherian local rings. Denote by $m$ the maximal ideal of
  $R$. Let us assume that $f_1,...,f_s$ is a sequence of elements of $S$ such
  that their images form a regular sequence in $S/mS$. Then
  $f_1,...,f_s$ is a regular sequence in $S$ and the quotient
  $S/(f_1,...,f_s)$ is flat over $R$.
\end{lemma}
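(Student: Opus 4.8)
The plan is to argue by induction on $s$, reducing everything to the case of a single element, and then to treat that case with the local criterion of flatness. For the reduction, suppose $s\ge 1$. The images of $f_1,\dots,f_{s-1}$ form a regular sequence in $S/mS$ (an initial segment of a regular sequence is regular), so by the inductive hypothesis $f_1,\dots,f_{s-1}$ is a regular sequence in $S$ and $\bar S:=S/(f_1,\dots,f_{s-1})$ is flat over $R$. Since $\bar S/m\bar S=(S/mS)/(\bar f_1,\dots,\bar f_{s-1})$, the definition of a regular sequence says precisely that the image of $f_s$ is a nonzerodivisor on $\bar S/m\bar S$. Thus it suffices to prove the statement for the flat $R$-algebra $\bar S$ and the single element $f_s$; applying it yields that $f_s$ is a nonzerodivisor on $\bar S$ (so that $f_1,\dots,f_s$ is regular in $S$) and that $\bar S/f_s\bar S=S/(f_1,\dots,f_s)$ is flat over $R$.

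So the heart of the matter is the case $s=1$: given $S$ flat over $R$ and $f\in S$ whose image is a nonzerodivisor on $S/mS$, one must show that $f$ is a nonzerodivisor on $S$ and that $S/fS$ is $R$-flat. Write $k=R/m$, set $K=\ker(f\colon S\to S)$, and split the four-term sequence into $0\to K\to S\xrightarrow{f} fS\to 0$ and $0\to fS\to S\to S/fS\to 0$. The local criterion of flatness reduces $R$-flatness of a finitely generated $S$-module $M$ to the vanishing of $\operatorname{Tor}_1^R(k,M)$. Tensoring the second sequence with $k$ and using $\operatorname{Tor}_1^R(k,S)=0$ identifies $\operatorname{Tor}_1^R(k,S/fS)$ with the kernel of the map $\alpha\colon fS\otimes_R k\to S\otimes_R k$ induced by the inclusion. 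The key observation is that multiplication by $f$ on $S/mS$ factors as $S\otimes_R k\twoheadrightarrow fS\otimes_R k\xrightarrow{\alpha} S\otimes_R k$; since this composite is injective by hypothesis, the surjection is an isomorphism and $\alpha$ is injective. Hence $\operatorname{Tor}_1^R(k,S/fS)=0$, so $S/fS$ is $R$-flat.

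It then remains to show $K=0$. Because $fS$ sits in $0\to fS\to S\to S/fS\to 0$ with both $S$ and $S/fS$ flat, $fS$ is flat as well (a submodule of a flat module with flat quotient is flat, from the long exact $\operatorname{Tor}$ sequence), so $\operatorname{Tor}_1^R(k,fS)=0$. Tensoring $0\to K\to S\to fS\to 0$ with $k$ and using that the induced map $S\otimes_R k\to fS\otimes_R k$ is the isomorphism from the previous step shows $K\otimes_R k=0$, i.e. $K=mK$. Since $R\to S$ is local, $mK$ lies in the maximal ideal of $S$, and Nakayama's lemma (applicable since $K$ is a finitely generated module over the noetherian ring $S$) forces $K=0$, as desired.

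I expect the main obstacle to be the $s=1$ argument, and within it the bookkeeping that identifies the relevant $\operatorname{Tor}$ group and verifies that the factorization of multiplication by $f$ forces the needed injectivity. Once the local criterion of flatness is in hand, the inductive reduction and the final Nakayama step are formal, so essentially all of the content lives in this single-element computation.
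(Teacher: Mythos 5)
Your proof is correct, and since the paper offers no argument of its own for this lemma (it simply cites the Corollary to Theorem 22.5 in Matsumura), there is nothing to diverge from: your induction on $s$ plus the local criterion of flatness for the single-element case is precisely the standard argument underlying that citation. The key factorization of multiplication by $\bar f$ through $fS\otimes_R k$ and the concluding Nakayama step are both handled correctly.
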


A sequence $f_1,...,f_s$ of elements of some ring $T$ determines a
$T$-linear homomorphism $t: T^s\to T$ (or equivalently a section $t\in
(T^s)^*=T^s$).  Let us recall that $t$ is called a \emph{regular
  section} if for all $i>0$ the homology groups $H_i(t)$ of the Koszul
complex of $t$ vanish. A sequence $f_1,...,f_s$ determines a regular
section if and only if $(f_1,...,f_s)$ is a regular sequence in $T_P$
for each prime ideal $P$ of $T$ that contains $f_1,...,f_s$.
Let us also recall that if  $(f_1,...,f_s)$ is a regular sequence in $T$
then  $f_1,...,f_s$ determines a regular section (see \cite[Theorem 16.5]{Ma}).

\begin{corollary} \label{regular-reduction}
  Let $R$ be a local Artin ring with a maximal ideal $m$ and
  let $\varphi : R\to S$ be a flat homomorphism of noetherian rings. Assume that
  $(f_1,...,f_s)$ is a sequence of elements of $S$ such that their
  images $(\bar f_1,...,\bar f_s)$ in $T=S/mS$ form a regular section of
  $T^s$.  Then $S/(f_1,...,f_s)$ is flat over $R$.
\end{corollary}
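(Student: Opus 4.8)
The plan is to deduce the statement from Lemma \ref{Matsumura} by localizing on $S$, using that flatness over $R$ is a local condition on the $S$-module $S/(f_1,\dots,f_s)$. Concretely, $S/(f_1,\dots,f_s)$ is $R$-flat if and only if its localization $\bigl(S/(f_1,\dots,f_s)\bigr)_P = S_P/(f_1,\dots,f_s)S_P$ is $R$-flat for every prime $P$ of $S$; this is because the $R$-module $\mathrm{Tor}^R_i\bigl(S/(f_1,\dots,f_s),N\bigr)$ is naturally an $S$-module and localization commutes with its formation. For primes $P$ not containing $(f_1,\dots,f_s)$ this localization vanishes, so it suffices to treat primes $P\supseteq (f_1,\dots,f_s)$ and to apply Lemma \ref{Matsumura} to the induced map $R\to S_P$.

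The key point, and the place where the Artinian hypothesis enters, is that $R\to S_P$ is automatically local. Indeed, $R$ being Artin local has a unique prime ideal, namely $m$; hence for every prime $P$ of $S$ the contraction $\varphi^{-1}(P)$ equals $m$, so $mS\subseteq P$ and $\varphi$ carries $m$ into the maximal ideal $PS_P$ of $S_P$. Since $S$ is noetherian, $S_P$ is a noetherian local ring, and $R\to S_P$ is flat because a localization of the flat map $\varphi$ remains flat; thus $R\to S_P$ is a flat local homomorphism of noetherian local rings, exactly as required by Lemma \ref{Matsumura}. Moreover the closed fibre is $S_P/mS_P = (S/mS)_P = T_{\bar P}$, the localization of $T$ at the prime $\bar P = P/mS$, which contains $(\bar f_1,\dots,\bar f_s)$ precisely because $P$ contains $(f_1,\dots,f_s)$.

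It then remains to verify the regular-sequence hypothesis of Lemma \ref{Matsumura} in the fibre $S_P/mS_P = T_{\bar P}$, and here I would invoke the characterization recalled above: since $(\bar f_1,\dots,\bar f_s)$ is a regular section of $T^s$, it forms a regular sequence in $T_{P'}$ for every prime $P'$ of $T$ containing these elements, and applying this to $P'=\bar P$ shows that the images of $f_1,\dots,f_s$ form a regular sequence in $T_{\bar P}$. Lemma \ref{Matsumura} then gives that $S_P/(f_1,\dots,f_s)S_P$ is flat over $R$, and running over all relevant $P$ yields the flatness of $S/(f_1,\dots,f_s)$ over $R$. The only genuine subtlety lies in the first two steps: one must check that the Artinian hypothesis forces every prime of $S$ to contract to $m$, since this is exactly what converts the global assertion into local instances of Lemma \ref{Matsumura} and makes the closed-fibre condition---which is all that the regular-section hypothesis controls---the relevant one.
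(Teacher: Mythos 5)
Your proof is correct and follows essentially the same route as the paper's: both arguments reduce to Lemma \ref{Matsumura} by localizing at primes of $S$ containing $(f_1,\dots,f_s)$, use the Artinian hypothesis to see that every prime of $S$ contracts to $m$ (so that $R\to S_P$ is local and the relevant fibre is $T_{\bar P}$), and then convert the regular-section hypothesis into a regular sequence in $T_{\bar P}$. The only cosmetic difference is that you run over all primes of $S$ and discard those not containing the ideal, whereas the paper indexes directly by primes of the quotient $S/(f_1,\dots,f_s)$.
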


\begin{proof}
  The canonical projection $S\to S/(f_1,...,f_s)$ is denoted by $\psi$.
Let us recall  that $S/(f_1,...,f_s)$ is flat over $R$ if and only if for every
  prime ideal $P$ of $S/(f_1,...,f_s)$, the localization $(S/(f_1,...,f_s))_P$ is flat
  over $R_Q$, where $Q=(\psi \varphi)^{-1}(P)$.

Let us fix prime  ideal $P$ as above.
Since $R$ is local and Artin, every prime ideal in $R$ is equal to $m$
and hence $\Spec k\to \Spec R$ is a bijection. Since $T=S/mS\simeq
S\otimes _R k$, the canonical projection $\pi: S\to T$ also induces a
bijection $\Spec T\to \Spec S$.  In particular, there exists a prime
ideal $\bar P$ in $T$ such that $P':=\pi^{-1}(\bar P)= \psi^{-1}(P)=P+(f_1,...,f_s)$. By
construction $\bar P$ contains $\bar f_1,...,\bar f_s$.  Since $(\bar
f_1,...,\bar f_s)$ determines a regular section of $T^s$, the sequence
$(\bar f_1,...,\bar f_s)$ is regular in $T_{\bar P}$. Then by
Lemma \ref{Matsumura} $f_1,...,f_s$ is a regular sequence in $S_{P'}$ and
the quotient $S_{P'}/(f_1,...,f_s)=(S/(f_1,...,f_s))_{P}$ is flat over
$R_Q=R$.
\end{proof}

\medskip

Let us consider
$$0\to I\to \tilde R\to R\to 0,$$
where  $ \tilde R$ and $R$  are local Artin rings with residue field $k$ and ideal $I$
satisfies $m_{ \tilde R}I=0$.
The following lemma is contained in the proof of \cite[Theorem 10.1, p.80]{Ha}:

\begin{lemma} \label{Hartshorne}
  Let $A$ be a finitely generated $R$-algebra and let us assume that
  there exists a flat lifting of $A$ to $ \tilde R\to R$.  Let us choose a
  presentation of $A$ as a quotient $R[x_1,...,x_n]/(f_1,...,f_s)$.
  Then there exist elements $ \tilde f_1,..., \tilde f_s$ in
  $ \tilde R [x_1,...,x_n]$ lifting $f_1,...,f_s$ and such that
$ \tilde A= \tilde R[x_1,...,x_n]/(\tilde f_1,...,\tilde f_s)$ is a flat lifting of
  $A$ to $ \tilde R\to R$.
\end{lemma}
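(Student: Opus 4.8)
The plan is to take the given flat lifting $B$ of $A$ over $\tilde R$ and to show that, once we fix lifts of the images of the coordinates $x_i$, the ring $B$ itself acquires a presentation of exactly the desired shape. Write $P=R[x_1,\ldots,x_n]$ and $\tilde P=\tilde R[x_1,\ldots,x_n]$, so that $A=P/J$ with $J=(f_1,\ldots,f_s)$, and let $B$ be a flat $\tilde R$-algebra together with an identification $B\otimes_{\tilde R}R\cong A$, that is $B/IB\cong A$. Since $B\to B/IB=A$ is surjective, I would lift the images of $x_1,\ldots,x_n$ in $A$ to elements $a_1,\ldots,a_n\in B$ and let $\phi\colon\tilde P\to B$ be the $\tilde R$-algebra homomorphism sending $x_i\mapsto a_i$.

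First I would check that $\phi$ is surjective. Reducing modulo $I$, the map $\phi$ becomes the presentation surjection $P\to A$, so $\phi$ is surjective modulo $I\tilde P$. Because the residue field of $\tilde R$ is $k$ and $R$ is local, $I=\ker(\tilde R\to R)\subseteq m_{\tilde R}$, and $m_{\tilde R}$ is nilpotent as $\tilde R$ is Artin local; hence $I\tilde P$ is a nilpotent ideal of $\tilde P$. The cokernel $C$ of $\phi$ then satisfies $C=I\tilde P\cdot C$, and nilpotence of $I\tilde P$ forces $C=0$. Set $\tilde J=\ker\phi$, so that $B\cong\tilde P/\tilde J$.

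The crucial step is to compare $\tilde J$ with $J$ using flatness, and I expect this to be the main obstacle, since it is precisely here that flatness of $B$ is indispensable. Tensoring the exact sequence $0\to\tilde J\to\tilde P\to B\to 0$ with $R$ over $\tilde R$ and using that $B$ is $\tilde R$-flat, so that $\mathrm{Tor}_1^{\tilde R}(B,R)=0$, I would obtain a short exact sequence $0\to\tilde J/I\tilde J\to P\to A\to 0$ in which the left-hand map is induced by reduction modulo $I$ (here $\tilde J\otimes_{\tilde R}R=\tilde J/I\tilde J$ and $\tilde P\otimes_{\tilde R}R=P$). This identifies $\tilde J/I\tilde J$ with $J=\ker(P\to A)$ compatibly with the inclusions into $\tilde P$ and $P$; in particular $\tilde J\cap I\tilde P=I\tilde J$. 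Without the vanishing of this $\mathrm{Tor}$ term the generators of $J$ could fail to lift to generators of $\tilde J$.

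Finally, since $\tilde J\to\tilde J/I\tilde J\cong J$ is surjective and $\tilde J\subseteq\tilde P$ reduces modulo $I$ onto its image in $P$, I would lift each $f_i$ to an element $\tilde f_i\in\tilde J\subseteq\tilde P$; by construction $\tilde f_i$ reduces to $f_i$. The images of $\tilde f_1,\ldots,\tilde f_s$ generate $\tilde J/I\tilde J$, so $\tilde J=(\tilde f_1,\ldots,\tilde f_s)+I\tilde J$, whence $\tilde J=(\tilde f_1,\ldots,\tilde f_s)+I^M\tilde J$ for every $M$; since $I^M\subseteq m_{\tilde R}^M=0$ for $M$ large, this gives $\tilde J=(\tilde f_1,\ldots,\tilde f_s)$. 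Therefore $\tilde A=\tilde R[x_1,\ldots,x_n]/(\tilde f_1,\ldots,\tilde f_s)=\tilde P/\tilde J\cong B$ is a flat lifting of $A$ realizing the chosen presentation, as required.
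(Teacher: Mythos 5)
Your proof is correct and is essentially the standard argument contained in the proof of Hartshorne's Theorem 10.1, which the paper cites instead of reproving: present the given flat lifting $B$ as a quotient of $\tilde R[x_1,\ldots,x_n]$ compatibly with the chosen presentation, use flatness (vanishing of $\mathrm{Tor}_1^{\tilde R}(B,R)$) to identify $\tilde J/I\tilde J$ with $J$, and conclude by nilpotence of $I$ that lifts of the $f_i$ generate $\tilde J$. All the steps, including the surjectivity of $\phi$ and the Nakayama-type finish, are carried out correctly.
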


\medskip

Let $k$ be a field and let $R\to k$ be a surjective morphism from a
ring $R$.  We say that a $k$-scheme $X$ is \emph{liftable
  to $R\to k$} if there exists a flat $R$-scheme $\tilde X$ and a
closed embedding $X\hookrightarrow \tilde X$ inducing an isomorphism
$X\to \tilde X\times _R k$. We say that a $k$-algebra $A$ is s
\emph{liftable to $R\to k$} if the corresponding $k$-scheme $\Spec A$
is {liftable to $R\to k$}.

\medskip

\subsection{Divided power algebra}

Let $(A, I)$ be a commutative ring and an ideal. Let us recall that a
\emph{divided power structure on $I$} is a sequence of maps $\gamma
_n:I \to A$ that behave like operations $x\to x^n/n!$. More precisely, this sequence is required to satisfy the following properties for all $n,m \ge 0$, $x, y\in I$ and $a\in A$:
\begin{enumerate}
\item $\gamma_0(x)=1$, $\gamma _1(x)=x$ and $\gamma _n (x)\in I$,
\item $\gamma _n(x+y)=\sum _{i+j=n}\gamma_i(x)\gamma_j(y)$,
\item $\gamma_n (ax)=a^n\gamma_n(x)$,
\item $\gamma_n(x)\gamma_m(x)=\binom {n+m}{n} \gamma_{n+m}(x)$,
\item $\gamma_n(\gamma_m(x))=\frac{(nm)!}{n!(m!)^n}\gamma_{nm}(x).$
\end{enumerate}

For basic properties of divided power structures see \cite[\S 3]{BO} and \cite[Tag
09PD]{Stacks}.
The following lemma can be found in \cite[Tag 09PD, Lemma
5.3]{Stacks}. $\ZZ_{(p)}$ in the lemma stands for the ring of $p$-adic integers, i.e., the
localization of $\ZZ$ along the multiplicative system $\ZZ-(p)$. We avoid notation $\ZZ _p$
as in algebraic geometry this could be confused with the localization of $\ZZ$ along the multiplicative system $\{p^n\}_{n\ge 0}$.

\begin{lemma} \label{equiv-P.D.-maps} Let $p$ be a prime number and
  let $A$ be a $\ZZ_{(p)}$-algebra with an ideal $I$.  Then we have a
  natural bijection between the set of divided power structures
  $\gamma$ on $I$ and maps $\delta: I\to I$ such that
\begin{enumerate}
\item $p!\, \delta (x)=x^p$ for all $x\in I$,
\item $\delta (ax)=a^p\delta (x)$ for all $a\in A$ and $x\in I$,

\item $\delta (x+y)=\delta (x)+\delta (y)+\sum
  _{i=1}^{p-1}\frac{1}{i!(p-i)!}x^iy^{p-i}$ for all $x,y \in I$.
\end{enumerate}
The correspondence is given by $\delta =\gamma _p$.
\end{lemma}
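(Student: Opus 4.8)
The plan is to verify the two directions of the bijection separately: that $\delta=\gamma_p$ satisfies (1)--(3) for any divided power structure $\gamma$, and that, conversely, a map $\delta$ with properties (1)--(3) is $\gamma_p$ for a unique $\gamma$. The forward direction is a direct computation. Iterating the multiplication rule (4) of a divided power structure gives $\gamma_1(x)^n=n!\,\gamma_n(x)$ for all $n\ge 0$; specializing to $n=p$ and using $\gamma_1(x)=x$ yields $x^p=p!\,\gamma_p(x)$, i.e. property (1). Moreover, for $0\le i<p$ the factor $i!$ is invertible in $\ZZ_{(p)}$, so the same identity gives $\gamma_i(x)=x^i/i!$. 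Property (2) is axiom (3) in the case $n=p$. For property (3), axiom (2) with $n=p$ reads $\gamma_p(x+y)=\sum_{i+j=p}\gamma_i(x)\gamma_j(y)$; isolating the extreme terms $i\in\{0,p\}$, which contribute $\delta(x)+\delta(y)$ because $\gamma_0=1$, and rewriting the remaining summands via $\gamma_i(x)=x^i/i!$ and $\gamma_{p-i}(y)=y^{p-i}/(p-i)!$, gives precisely property (3).

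For the reverse direction the crucial observation is that on a $\ZZ_{(p)}$-algebra every $\gamma_n$ is forced by $\gamma_p$, which simultaneously yields injectivity and tells us how to build the inverse map. Write $n=\sum_{i\ge 0}n_ip^i$ in base $p$. Applying axiom (5) with $(n,m)=(p,p^{k-1})$ expresses $\gamma_{p^k}(x)$ as $c_k^{-1}\gamma_p(\gamma_{p^{k-1}}(x))$ with $c_k=(p^k)!/\bigl(p!\,((p^{k-1})!)^p\bigr)$; a short valuation count shows $c_k$ has $p$-adic valuation $0$, hence is a unit in $A$, so by induction $\gamma_{p^k}(x)$ is a unit multiple of the $k$-fold composite $\delta(\delta(\cdots\delta(x)))$. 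Iterating axiom (4) then writes $\prod_i\gamma_{p^i}(x)^{n_i}$ as a multinomial-coefficient multiple of $\gamma_n(x)$, and by Kummer's theorem these coefficients correspond to carry-free additions in base $p$, hence are again $p$-adic units. Solving, $\gamma_n(x)$ is a unit multiple of $\prod_i\gamma_{p^i}(x)^{n_i}$ and is thus determined by $\delta$.

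To prove existence I would define $\gamma_n$ by exactly these formulas starting from a given $\delta$ and then verify axioms (1)--(5); I expect this verification to be the main obstacle. Axioms (1) and (3) are routine: each $\gamma_n(x)$ lies in $I$ because $\delta(I)\subset I$, and $\gamma_n(ax)=a^n\gamma_n(x)$ follows from the homogeneity $\delta(ax)=a^p\delta(x)$ pushed through the defining product, using $\sum_i n_ip^i=n$. The real content is in the additivity axiom (2), the product rule (4), and the composition axiom (5). My strategy would be to reduce each to the prime level through the base-$p$ expansion: the ``prime-to-$p$'' parts are the classical binomial and multinomial identities, valid because the factorials that occur are units in $\ZZ_{(p)}$, whereas every genuinely $p$-divisible relation collapses to a statement about $\delta$ alone --- for instance the carry relation $\gamma_{p^i}(x)^p=(\text{unit})\cdot p\,\gamma_{p^{i+1}}(x)$ needed for (4) is property (1) applied to the element $\gamma_{p^i}(x)\in I$, and the case $n=p$ of additivity is exactly property (3). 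The combinatorics of Kummer and Lucas then reconcile the coefficients appearing on the two sides in base $p$. The technical heart is the additivity axiom (2) for general $n$, which I would establish by induction on the number of base-$p$ digits, invoking property (3) at each carry; once (2) is in hand, the product rule (4) and the composition rule (5) follow from shorter inductions of the same type.
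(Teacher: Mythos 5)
The paper does not prove this lemma at all: it is quoted from the Stacks project \cite[Tag 09PD, Lemma 5.3]{Stacks}, so there is no in-paper argument to compare against. Measured on its own terms, your proposal follows the same route as the cited reference. The forward direction is complete and correct: iterating axiom (4) gives $x^n=n!\,\gamma_n(x)$, hence property (1) and the formula $\gamma_i(x)=x^i/i!$ for $i<p$ (using that $i!$ is a unit in a $\ZZ_{(p)}$-algebra), and properties (2) and (3) drop out of axioms (3) and (2) as you say. Your uniqueness argument is also correct and complete: the valuation count showing $c_k=(p^k)!/\bigl(p!\,((p^{k-1})!)^p\bigr)$ is a $p$-adic unit is right (both sides have valuation $(p^k-1)/(p-1)$), and by Legendre's formula the multinomial coefficient $n!/\prod_i(p^i!)^{n_i}$ attached to a carry-free base-$p$ decomposition is likewise a unit, so $\gamma_n$ is pinned down by $\delta$.

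The one genuine shortfall is the existence half. You write down the correct candidate $\gamma_n(x)=(\hbox{unit})\cdot x^{n_0}\delta(x)^{n_1}\delta^2(x)^{n_2}\cdots$, but the verification of axioms (2), (4) and (5) for this candidate --- which you yourself identify as ``the main obstacle'' --- is left as a strategy rather than carried out. This is where essentially all of the work in the reference proof lies: additivity for general $n$ requires a genuine induction on base-$p$ digits in which property (3) of $\delta$ is invoked at each carry, and the compatibility of the normalizing units across carries (e.g.\ that $\gamma_{p^i}(x)^p$ produces exactly one factor of $p$, matching the valuation-$1$ multinomial $\frac{(p^{i+1})!}{(p^i!)^p}$) has to be checked, not just asserted to ``reconcile''. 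Your plan is the right one and would succeed, but as written the proposal proves the forward direction and uniqueness while only outlining existence; to be a proof it would need the digit-by-digit verification spelled out, or an explicit appeal to the reference.
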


\begin{corollary} \label{P.D.-hom}
  Let $p$ be a prime number and let $A$ be a $\ZZ_{(p)}$-algebra.  Let
  $(A, I, \gamma)$ and $(B, J, \eta)$ be divided power rings and let
  $\varphi: A\to B$ be a homomorphism of rings such that $\varphi
  (I)\subset J$. Let $T\subset I$ be a set of generators of $I$.  Then
  $\varphi$ induces a homomorphism of divided power rings $(A, I,
  \gamma)\to (B, J, \eta)$ if and only if $\eta _p (\varphi
  (t))=\varphi (\gamma _p (t))$ for all $t\in T$.
\end{corollary}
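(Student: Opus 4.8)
The plan is to dispatch the two implications separately. The \emph{only if} direction is immediate: a morphism of divided power rings satisfies $\eta_n(\varphi(x)) = \varphi(\gamma_n(x))$ for all $n$ and all $x \in I$, and specializing to $n = p$, $x = t \in T$ gives the asserted identities. All the work is in the converse, which I would split into two steps: first upgrade the hypothesis from the generating set $T$ to the whole ideal $I$ (still only for the $p$-th divided power), and then promote it from $n = p$ to all $n$. Throughout I use that $B$ is automatically a $\ZZ_{(p)}$-algebra, since the composite $\ZZ_{(p)} \to A \to B$ makes it one, so that $\varphi$ is $\ZZ_{(p)}$-linear and Lemma \ref{equiv-P.D.-maps} applies to $(B, J, \eta)$ as well.

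For the first step, write $\delta_A = \gamma_p$, $\delta_B = \eta_p$ and consider the defect $D(x) := \eta_p(\varphi(x)) - \varphi(\gamma_p(x)) \in J$, which vanishes on $T$ by hypothesis. Property (2) of Lemma \ref{equiv-P.D.-maps} for $\delta_A$ and $\delta_B$ gives $\eta_p(\varphi(ax)) = \varphi(a)^p \eta_p(\varphi(x))$ and $\varphi(\gamma_p(ax)) = \varphi(a)^p\varphi(\gamma_p(x))$, whence $D(ax) = \varphi(a)^p D(x)$. Property (3) of the same lemma, together with the fact that $\varphi$ fixes the coefficients $\tfrac{1}{i!(p-i)!} \in \ZZ_{(p)}$ so that the cross terms $\sum_{i=1}^{p-1}\tfrac{1}{i!(p-i)!}\varphi(x)^i\varphi(y)^{p-i}$ agree on both sides and cancel, gives $D(x+y) = D(x) + D(y)$. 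Since every element of $I$ is a finite sum $\sum_j a_j t_j$ with $a_j \in A$, $t_j \in T$, additivity and semilinearity force $D(\sum_j a_j t_j) = \sum_j \varphi(a_j)^p D(t_j) = 0$. Hence $\eta_p \circ \varphi = \varphi \circ \gamma_p$ on all of $I$.

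The second step reconstructs every $\gamma_n$ from $\gamma_p$. For $0 \le n < p$, repeated use of axiom (4) of the divided power structure yields $n!\,\gamma_n(x) = x^n$, so $\gamma_n(x) = x^n/n!$ with $n!$ invertible, a formula $\varphi$ respects automatically. For prime powers, axiom (5) with $m = p^{i-1}$ gives $\gamma_p(\gamma_{p^{i-1}}(x)) = c_i\,\gamma_{p^i}(x)$ where $c_i = \tfrac{(p^i)!}{p!\,((p^{i-1})!)^p}$; Legendre's formula $v_p(m!) = (m - s_p(m))/(p-1)$ gives $v_p(c_i) = 0$, so $c_i$ is a unit and $\gamma_{p^i}(x) = c_i^{-1}\gamma_p(\gamma_{p^{i-1}}(x))$. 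As $\gamma_{p^{i-1}}(x) \in I$, the first step lets $\varphi$ commute past the inner $\gamma_p$, and an induction on $i$, with the same constant $c_i$ governing $\eta$, gives $\eta_{p^i}\circ\varphi = \varphi\circ\gamma_{p^i}$. Finally, writing $n = \sum_i a_i p^i$ in base $p$, the iterated form of axiom (4) applied to the partition of $n$ into $a_i$ copies of each $p^i$ gives $\prod_i \gamma_{p^i}(x)^{a_i} = d_n\,\gamma_n(x)$ with $d_n = \tfrac{n!}{\prod_i (p^i!)^{a_i}}$, and again $v_p(d_n) = 0$. Since the identical formula holds in $B$, combining everything gives $\eta_n(\varphi(x)) = \varphi(\gamma_n(x))$ for all $n$ and all $x \in I$, which is exactly the statement that $\varphi$ is a morphism of divided power rings.

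The main obstacle is this last step: one must know that each $\gamma_n$ is built from $\gamma_p$ by operations (iteration, products, multiplication by fixed coefficients in $\ZZ_{(p)}$) that a ring homomorphism automatically commutes with, and the crux is the two valuation computations $v_p(c_i) = v_p(d_n) = 0$. These are precisely what make ``$\gamma_p$ determines the whole divided power structure'' hold over $\ZZ_{(p)}$; without the unit property of $c_i$ and $d_n$ the reconstruction formulas could not be inverted, and the passage from $n = p$ to general $n$ would break down.
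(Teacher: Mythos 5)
Your proof is correct and follows essentially the same route as the paper, which disposes of the corollary in one sentence by invoking the definition of divided power structures together with the fact (Lemma \ref{equiv-P.D.-maps}) that $\gamma_n$ is determined by $\gamma_p$ over a $\ZZ_{(p)}$-algebra. Your two steps --- extending the identity from the generating set $T$ to all of $I$ via the additivity and $p$-semilinearity of the defect, and reconstructing every $\gamma_n$ from $\gamma_p$ via the unit constants $c_i$ and $d_n$ --- are exactly the details the paper leaves to the reader, and both valuation computations check out.
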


The above corollary follows easily from the definition of divided power structures together with the fact that
for $\ZZ_{(p)}$-algebras $\gamma_n$ is determined by $\gamma _p$ (which is the content of Lemma 
\ref{equiv-P.D.-maps}).

\medskip

Let $M$ be an $A$-module. Let us recall that a map $\tau : M\to M$
is called \emph{$p$-linear} if it is additive and $\tau
(ax)=a^p\tau (x)$ for all $a\in A$ and $x\in M$. For an ideal $I$ in
$A$ we denote by $I^{(p)}$ the ideal in $A$ generated by $x^p$ for all
$ x\in I$.

\begin{corollary}\label{PD-pI=0}
  Let $p$ be a prime number and let $A$ be a $\ZZ_{(p)}$-algebra with
  an ideal $I$.  Let us assume that $pI=0$ and $I$ admits a divided
  power structure.  Then $I^{(p)}=0$ and the set of all divided power
  structures on $I$ forms a torsor over the group of $p$-linear maps
  $\tau: I\to I$.  In particular, if $pA=0$ then the set of divided
  power structures on $(A,I)$ is isomorphic to $\Hom _A(I/I^2, F_*I)$.
\end{corollary}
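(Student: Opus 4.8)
The plan is to work throughout with the reformulation provided by Lemma \ref{equiv-P.D.-maps}: since $A$ is a $\ZZ_{(p)}$-algebra, a divided power structure on $I$ is the same datum as a map $\delta=\gamma_p:I\to I$ satisfying conditions (1)--(3) of that lemma. First I would dispose of the claim $I^{(p)}=0$. For any $x\in I$, condition (1) gives $p!\,\delta(x)=x^p$. Because $p$ is prime, $p$ divides $p!$, so $p!\,\delta(x)=p\cdot\bigl((p-1)!\,\delta(x)\bigr)$ lies in $pI$, which vanishes by hypothesis. Hence $x^p=0$ for every $x\in I$, and as these elements generate $I^{(p)}$ we conclude $I^{(p)}=0$.

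For the torsor statement I would compare two maps $\delta_1,\delta_2$ of the above type and set $\tau=\delta_1-\delta_2$. Condition (3) involves the same correction terms $\frac{1}{i!(p-i)!}x^iy^{p-i}$ for both maps, so on subtracting they cancel and $\tau(x+y)=\tau(x)+\tau(y)$; condition (2) yields $\tau(ax)=a^p\tau(x)$. Thus $\tau$ is $p$-linear. Conversely, given a valid $\delta$ and a $p$-linear map $\tau:I\to I$, I would verify that $\delta+\tau$ again satisfies (1)--(3): conditions (2) and (3) are manifestly preserved by adding an additive, $p$-homogeneous map, while (1) survives because $p!\,\tau(x)=0$ by the same divisibility argument as above (here $\tau(x)\in I$ and $pI=0$). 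Since the $p$-linear maps form an abelian group under addition whose action on the set of structures is visibly free and transitive, and this set is nonempty by hypothesis, it is a torsor over that group.

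Finally, assume $pA=0$. A $p$-linear map $\tau:I\to I$ is exactly an $A$-linear map $I\to F_*I$, where $F_*I$ denotes $I$ with $A$ acting through Frobenius, so that $A$-linearity reads $\tau(ax)=a^p\tau(x)$. The key observation is that any such $\tau$ automatically annihilates $I^2$: for $x,y\in I$ I would invoke the $p$-homogeneity of $\tau$ with the scalar $a=x\in I\subseteq A$, obtaining $\tau(xy)=x^p\tau(y)$, which vanishes since $x^p\in I^{(p)}=0$ by the first step; additivity then gives $\tau|_{I^2}=0$. Hence $\tau$ factors uniquely through $I/I^2$, and this identifies the group of $p$-linear maps with $\Hom_A(I/I^2,F_*I)$. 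Combined with the torsor statement, fixing one divided power structure as a base point yields the asserted bijection with $\Hom_A(I/I^2,F_*I)$.

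The verifications that conditions (1)--(3) are preserved under the group action are routine; the one genuinely delicate point, which I would emphasize, is the use of an \emph{ideal} element as a scalar in the $p$-homogeneity relation, which is what forces $\tau$ to vanish on $I^2$ and thereby replaces $\Hom_A(I,F_*I)$ by $\Hom_A(I/I^2,F_*I)$ in the last step.
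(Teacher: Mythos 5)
Your proposal is correct and follows essentially the same route as the paper: both reduce to the $\delta=\gamma_p$ reformulation of Lemma \ref{equiv-P.D.-maps}, deduce $I^{(p)}=0$ from $p!\,\delta(x)=x^p\in pI=0$, observe that differences of structures are $p$-linear and that adding a $p$-linear map preserves conditions (1)--(3), and identify $p$-linear maps with $\Hom_A(I/I^2,F_*I)$ via $\tau(xy)=x^p\tau(y)=0$ when $pA=0$. Your write-up merely makes explicit the verifications the paper leaves implicit.
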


\begin{proof}
  Vanishing of $I^{(p)}$ follows from condition 1 in Lemma
  \ref{equiv-P.D.-maps}.  If $\gamma$ and $\gamma\, {}'$ are divided power
  structures on $I$ then the difference $\tau=\gamma_p-{\gamma _p}'$ is
  $p$-linear. Lemma \ref{equiv-P.D.-maps} implies that if $\gamma$ is
  a divided power structure on $I$ and $\tau: I\to I$ is a $p$-linear
  map then there exists a unique divided power structure $\gamma\, {}' $ on
  $I$ such that ${\gamma_p}'=\gamma_p+\tau$. The second assertion
  follows from the fact that if $pA=0$ then $p$-linear maps $I\to I$
  correspond to $A$-linear maps $\tau: I\to F_*I$ and $\tau (I^2)=0$ as $I^{(p)}=0$.
\end{proof}

\subsection{Combinatorics}

Let us recall the following easy facts.  The following lemma can be found, e.g., in \cite[Theorem 1]{Fi}.

\begin{lemma}\emph{(Lucas's theorem)}\label{Lucas}
Let $p$ be a prime number and let $m$ and $n$ are non-negative integers.
Let us write $m=\sum a_i p^i$ and $n=\sum b_i p^i$, with $0\le a_i, b_i<p$.
Then
\[
\binom{m}{n}\equiv \prod \binom {a_i}{b_i} \mod p,
\]
where $\binom{a}{b}=0$ if $b>a$.
\end{lemma}

The next lemma is even more standard and it can be found in any book containing
combinatorial formulas:

\begin{lemma}\emph{(Vandermonde's identity)}\label{Vandermonde}
Let $l, m, n$ be non-negative integers.  Then we have
\[
\binom{m+n}{l}=\sum _{i=0}^l \binom{m}{i}\binom{n}{l-i}.
\]
\end{lemma}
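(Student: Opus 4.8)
The plan is to prove the identity by comparing coefficients in the factorization $(1+x)^{m+n}=(1+x)^m(1+x)^n$ of polynomials in the single variable $x$ over $\ZZ$. First I would expand the left-hand side by the binomial theorem, so that the coefficient of $x^l$ is exactly $\binom{m+n}{l}$. Next I would expand each factor on the right-hand side, writing $(1+x)^m=\sum_{i\ge 0}\binom{m}{i}x^i$ and $(1+x)^n=\sum_{j\ge 0}\binom{n}{j}x^j$, where, as in Lemma \ref{Lucas}, we adopt the convention $\binom{a}{b}=0$ whenever $b>a$, so that both sums are finite.

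Multiplying these two expansions, the coefficient of $x^l$ in the product is the Cauchy-product term $\sum_{i+j=l}\binom{m}{i}\binom{n}{j}=\sum_{i=0}^l\binom{m}{i}\binom{n}{l-i}$. Since the two polynomials $(1+x)^{m+n}$ and $(1+x)^m(1+x)^n$ are literally equal as elements of $\ZZ[x]$, their coefficients agree term by term, and equating the coefficients of $x^l$ yields the asserted identity.

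There is essentially no genuine obstacle here; the only point requiring a word of care is the bookkeeping of the summation range, namely checking that the convention $\binom{a}{b}=0$ for $b>a$ makes the finite sum over $0\le i\le l$ coincide with the convolution $\sum_{i+j=l}$ arising from the product of the two expansions (terms with $i>m$ or $l-i>n$ simply vanish). Alternatively, one could give a direct bijective argument: interpreting $\binom{m+n}{l}$ as the number of $l$-element subsets of a set $S$ partitioned into disjoint parts $S_1$ and $S_2$ of sizes $m$ and $n$, every such subset meets $S_1$ in some $i$ elements and $S_2$ in the remaining $l-i$ elements, and summing over the disjoint cases indexed by $i$ recovers the right-hand side.
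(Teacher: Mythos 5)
Your proof is correct and complete: the generating-function argument comparing the coefficient of $x^l$ in $(1+x)^{m+n}=(1+x)^m(1+x)^n$ is the canonical proof of Vandermonde's identity, and you correctly handle the summation-range bookkeeping via the convention $\binom{a}{b}=0$ for $b>a$. The paper itself offers no proof at all, simply remarking that the identity is standard and can be found in any book of combinatorial formulas, so there is nothing to compare against; either of your two arguments (the coefficient comparison or the bijective count of $l$-element subsets of a set partitioned into parts of sizes $m$ and $n$) would serve perfectly well.
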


\section{Locally complete intersections and divided powers}

Let $k$ be a field of characteristic $p$. Let us set
$S_0=k[x_1,...,x_n]$ and $m_{0}=(x_1,...,x_n)$ in $S_0$.  Since
$m_{0}^{(p)}\ne 0$ Corollary \ref{PD-pI=0} implies that $(S_0, m_{0})$ does
not have a divided power structure.  However, we have the following
lemma.

\begin{lemma}\label{P.D.-lci}
  Let $(R, I, \gamma)$ be a divided power ring. Let us assume that
  $(p-1)!$ is invertible in $R$ and $pI=0$.  Let us set
  $S=R[x_1,...,x_n]$, $I_S=IS+(x_1,...,x_n)$ and
  $A=S/(x_1^{i_1},...,x_n^{i_n})$, where $i_j\le p$ for all $j$. Then
  for any $y_1,...,y_n\in I_SA$ there exists a unique divided power
  structure $\tilde \gamma$ on $I_S A$ such that $\tilde \gamma _p
  (x_i)=y_i$ for $i=1,...,n$ and the canonical map $(R, I, \gamma)\to
  (A, I_SA, \tilde \gamma)$ is a homomorphism of divided power rings.
\end{lemma}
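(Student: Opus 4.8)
The plan is to use Lemma \ref{equiv-P.D.-maps} to replace the whole sequence $(\tilde\gamma_n)$ by the single map $\tilde\delta:=\tilde\gamma_p\colon I_SA\to I_SA$, and to recast everything as conditions (1)--(3) of that lemma together with the two normalizations coming from the required compatibilities. Indeed, by Corollary \ref{P.D.-hom}, applied to the canonical map $\varphi\colon R\to A$ and to the generating set $\{x_1,\dots,x_n\}\cup\varphi(T)$ of $I_SA$ (where $T$ generates $I$), the map $\varphi$ is a PD-homomorphism and $\tilde\gamma_p(x_i)=y_i$ \emph{precisely} when $\tilde\delta|_{\varphi(I)}=\gamma_p$ and $\tilde\delta(x_i)=y_i$. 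So it suffices to produce a map $\tilde\delta$ satisfying (1)--(3) with these two prescribed restrictions.

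The observation that governs the whole argument is that $x_j^{\,p}=0$ in $A$ for every $j$, since $x_j^{i_j}=0$ and $i_j\le p$. Writing $A=\bigoplus_{0\le\alpha_j<i_j}Rx^\alpha$ as a free $R$-module and $I_SA=I\cdot 1\oplus\bigoplus_{\alpha\ne 0}Rx^\alpha$, I would first pin down the values that (1)--(3) force on the monomial generators. Condition (2) gives $\tilde\delta(x^\alpha)=\tilde\delta(x^{\alpha-e_i}\cdot x_i)=(x^{\alpha-e_i})^p\,\tilde\delta(x_i)$ whenever $\alpha_i\ge 1$ (here $e_i$ is the $i$th unit vector); since $(x^{\alpha-e_i})^p=x^{p\alpha-pe_i}$ still contains a factor $x_l^{p\alpha_l}=0$ as soon as $\alpha$ has a second nonzero entry, and $x_i^{(m-1)p}=0$ for $m\ge 2$, this forces $\tilde\delta(x^\alpha)=0$ for all $\alpha\ne 0$ except $\tilde\delta(x_i)=y_i$. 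Together with $\tilde\delta|_{\varphi(I)}=\gamma_p$ and conditions (2)--(3), this determines $\tilde\delta$ on all of $I_SA$, which already yields uniqueness (equivalently, apply Corollary \ref{P.D.-hom} to the identity map of $A$).

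For existence I would take the formula dictated by the previous step as a \emph{definition}. Decomposing a general $z=\iota+\sum_{\alpha\ne 0}c_\alpha x^\alpha$ and expanding by the multinomial theorem, the terms split into: the purely degree-zero part, handled by $\gamma_p(\iota)$; the cross-terms $\tfrac{1}{i!(p-i)!}z^iw^{p-i}$, which make sense because $i!,(p-i)!$ divide the invertible $(p-1)!$; the pure $p$-th powers $c_\alpha^p(x^\alpha)^p$, which vanish since $x_j^p=0$; and the remaining genuinely mixed multinomial terms, whose coefficients $\tfrac{p!}{\prod a_\alpha!}$ are divisible by $p$ (no $a_\alpha$ equals $p$), so that dividing by $p!$ is made canonical by the unit $(p-1)!$. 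The prescription $\tilde\delta(x_i)=y_i$ contributes the one surviving non-vanishing monomial value. One then checks conditions (1)--(3) directly: (1) uses $p!=p\,(p-1)!$ with $(p-1)!$ a unit together with the vanishing of the pure $p$-th powers, while (2) is built into the definition.

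The main obstacle is condition (3), ``additivity up to correction''. After expanding $\tilde\delta(z+w)$ one must match the cross-terms $\sum_{i=1}^{p-1}\tfrac{1}{i!(p-i)!}z^iw^{p-i}$ against $\tilde\delta(z+w)-\tilde\delta(z)-\tilde\delta(w)$ modulo the relations $x_j^p=0$ and $pI=0$. This reduces to identities among binomial coefficients: Vandermonde's identity (Lemma \ref{Vandermonde}) controls the regrouping of multinomial coefficients under addition, while Lucas's theorem (Lemma \ref{Lucas}) identifies exactly which coefficients vanish modulo $p$ and hence which mixed terms survive. I expect this bookkeeping — verifying that the chosen canonical ``division by $p!$'' is compatible with the addition law — to be the only delicate point. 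Once it is in place, compatibility with $\gamma$ on $I$ and the prescribed values $\tilde\delta(x_i)=y_i$ are immediate, and Corollary \ref{P.D.-hom} packages the result as the asserted homomorphism of divided power rings.
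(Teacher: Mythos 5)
Your uniqueness argument is sound and is essentially the same as the paper's (which also deduces uniqueness from Lemma \ref{equiv-P.D.-maps}): the axioms force $\tilde\gamma_p(x^\alpha)=0$ for $|\alpha|\ge 2$, and then $\tilde\gamma_p$ is determined everywhere by its values on $x_1,\dots,x_n$ and on $I$. The existence half, however, has a real gap. You define $\tilde\delta$ by an explicit formula attached to the decomposition $I_SA=I\cdot 1\oplus\bigoplus_{\alpha\ne 0}Rx^\alpha$ and then assert that axiom (2) is ``built into the definition'' and that axiom (3) is bookkeeping you ``expect'' to work. Neither claim holds as stated. Axiom (2) requires $\tilde\delta(az)=a^p\tilde\delta(z)$ for an \emph{arbitrary} $a\in A$, and multiplication by such an $a$ does not respect your decomposition (it mixes the $I\cdot 1$ summand into the coefficients $c_{e_i}$, for instance), so (2) needs a genuine computation of the same kind as (3). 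And (3) is precisely the content of the lemma: it is the assertion that your canonical ``division by $p!$'' of the mixed multinomial terms is compatible with addition. Lucas and Vandermonde are not really the right tools here; the standard device is to verify the identity for universal coefficients in a torsion-free ring $\ZZ[1/(p-1)!][\dots]$, where dividing $\binom{p}{a_1,\dots,a_m}$ by $p!$ is unambiguous, and then specialize --- and even then one must separately handle the interaction with $\gamma_p$ on $I$ and with the prescribed values $y_i$ (for the latter, note that axiom (1) applied to $x_i$ forces $p\,y_i=0$, which is what kills your terms $\binom{p}{j}(\cdots)y_i$; this needs to be said).

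That universal verification is exactly what the divided power polynomial algebra $R\langle x_1,\dots,x_n\rangle$ packages, and this is how the paper proceeds: it maps $R\langle x_1,\dots,x_n\rangle$ onto $A$ by $x_j^{[m]}\mapsto x_j^m/m!$ for $m<i_j$ and $x_j^{[m]}\mapsto 0$ otherwise, checks that the kernel is a sub-D.P.\ ideal via \cite[Lemmas 3.5 and 3.6]{BO}, obtains on the quotient the structure $\tilde\gamma^0$ with $\tilde\gamma^0_p(x_i)=0$, and finally translates by a $p$-linear map to reach arbitrary $y_i$ (as in Corollary \ref{PD-pI=0}). If you want to keep the hands-on construction you must actually carry out the verifications of (2) and (3); otherwise the shortest repair is to quote the existence of $R\langle x_1,\dots,x_n\rangle$ and argue as above.
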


\begin{proof}
Let $R\langle x_1,...,x_n\rangle$ be the divided power polynomial
algebra (see \cite[Tag 09PD, Lemma 5.1]{Stacks}). This is an $R$-algebra with an $R$-module structure given by
$$R\langle x_1,...,x_n\rangle =\bigoplus _{m_1,...,m_n\ge 0}Rx_1^{[m_1]}...x_n^{[m_n]}$$
and multiplication given by
$$x_1^{[m_1]}...x_n^{[m_n]}\cdot x_1^{[m_1']}...x_n^{[m_n']}=
\prod _{i=1}^n\binom{m_i+m_i'}{m_i}
x_1^{[m_1+m_1']}...x_n^{[m_n+m_n']}. \leqno{(*)}$$ In particular, as an $R$-algebra ring $R\langle
x_1,...,x_n\rangle$ is generated  by $x_i^{[m]}$,
where $m\ge 0$ and $i=1,...,n$.

Let $R\langle x_1,...,x_n\rangle _{+}$ be the kernel of the
canonical map $R\langle x_1,...,x_n\rangle \to R$ sending
$x_i^{m}$ to zero for $m>0$. Let us set $J=IR\langle
x_1,...,x_n\rangle +R\langle x_1,...,x_n\rangle _{+}$. Then there
exists a unique divided power structure $\delta $ on $J$
such that $(R, I, \gamma)\to (R\langle x_1,...,x_n\rangle , J, \delta) $ is a homomorphism of divided power
rings and $\delta _m(x_i^{[1]})=x_i^{[m]}$ for all $m\ge 0$ and $i=1,..,n$.

Let us define a surjective homomorphism of $R$-modules $\varphi:
R\langle x_1,...,x_n\rangle \to A$ by sending
$x_1^{[m_1]}...x_n^{[m_n]}$ to
$\frac{1}{m_1!...m_n!}x_1^{m_1}...x_n^{m_n}$ if $m_j<i_j$ for all
$j=1,...,n$ and to $0$ otherwise. $(*)$ implies that this map
is a homomorphism of $R$-algebras. The kernel of $\varphi$ is an
ideal generated by $x_j^{[m]}$ for $m\ge i_j$ and $j=1,...,n$.
Since
$$\delta _l (x_i^{[m]})=\delta _l(\delta _m (x_i^{[1]}))=\frac{(lm)!}{l! (m!)^l}\delta _{lm} (x_i^{[1]})
=\frac{(lm)!}{l! (m!)^l}x_i^{[lm]},$$
\cite[Lemma 3.6]{BO} implies that the kernel of $\varphi$ is a sub-D.P.
ideal of $J$. Therefore by \cite[Lemma 3.5]{BO} we have an
induced divided power structure $\tilde \gamma^0$ on $I_SA$. Since
$\delta _p (x_i^{[1]})=x_i^{[p]}\in \ker \varphi$, we have $\tilde \gamma^0
_p (x_i)=0$ for $i=1,...,n$. Now for any  $y_1,...,y_n\in I_SA$ there exists
a $p$-linear map $\tilde \delta$ such that  $\tilde \delta _p (x_i)=y_i$
for $i=1,...,n$. Then $\tilde \gamma =\tilde \gamma ^0+\tilde \delta$ is
the required divided power structure on $I_SA$. Uniqueness of $\tilde \gamma $
follows from Lemma \ref{equiv-P.D.-maps}.
\end{proof}

\begin{example}
  Let $k$ be a field of characteristic $p>0$ and assume
  $i_1,...,i_n\le p$.  The above lemma fails for a general lifting $A$
  of $k[x_1,..,x_n]/(x_1^{i_1},...,x_n^{i_n})$ to $R\to k$.  This is
  clear if $pR=0$. For example if $A$ is a lifting of $A_0=k[x]/(x^p)$
  to $R\to k$ then we can write $A=R[x]/(x^p+g)$ for some $g\in m_RA$
  and any such ring is a lifting of $A_0$ to $R\to k$. But existence
  of a divided power structure on any ideal in $A$ containing $x$
  implies that $x^p=0$ in $A$ which is usually not the case.

  The lemma fails also for liftings if $pR\ne 0$.  For example let us
  take as $(R, I, \gamma)$ the ring $W_2(k)$ with $I=(p)$ and $\gamma
  _p=0$.  Let us consider $A=W_2(k)[x]/(x^p-p)$.  This ring is a flat
  lifting of $k[x]/(x^p)$ to $R\to k$. Let us assume that there exists
  a divided power structure $\tilde \gamma$ on ideal $(p,x)$.  Then we
  have $ p!\tilde \gamma _p (x)=x^p=p$, which implies $p((p-1)!\tilde
  \gamma _p (x)-1)=0$.  But $W_2(k)$-flatness of $A$ implies that
  $pA\simeq A/(p)$ and hence $((p-1)!\tilde \gamma _p (x)-1)\in (p)$.
  Since $\tilde \gamma _p (x)\in (p,x)$, this gives $1\in (p,x)$, a
  contradiction.
\end{example}

  Let $k$  be a field of characteristic $p$.  Let us set
  $A=k[x_1,...,x_n]/(x_1^{i_1},...,x_n^{i_n})$, where $2\le i_j\le p$ for all $j$.
  Lemma \ref{P.D.-lci} implies that there exists a unique divided power structure $\gamma$ on
$I_A=(x_1,...,x_n)A$ such that $\gamma _p (x_i)=0$ for
$i=1,...,n$. Let us consider $B=k[y_{1,1},...,y_{1,i_1-1}, ...,
y_{n,1},...,y_{n, i_n-1}]/(y_{i,j}^2)_{i=1,...,n, j=1,...,i_n-1}$.
This ring also comes with a unique divided power structure
$\delta$ on $I_B=(y_{l,j})_{l=1,...,n, j=1,...,i_l-1}B$ such that
$\delta _p (y_{i,j})=0$ for all $i$, $j$.

\begin{proposition} \label{DP-hom-example}
The map $x_l\to \sum _{j=1}^{i_l-1}y_{l,j}$ for $l=1,..,n$ defines
an injective homomorphism of  divided power rings $\varphi: (A,
I_A, \gamma)\to (B, I_B, \delta)$.
\end{proposition}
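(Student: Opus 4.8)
The plan is to verify three things in turn: that $\varphi$ is a well-defined homomorphism of $k$-algebras $A\to B$, that it respects the divided power structures, and that it is injective. For the ring homomorphism it suffices to check that the assignment $x_l\mapsto \sum_{j=1}^{i_l-1}y_{l,j}$ kills the defining relations, i.e.\ that $(\sum_{j=1}^{i_l-1}y_{l,j})^{i_l}=0$ in $B$. Since $y_{l,j}^2=0$, a nonzero squarefree monomial in the variables $y_{l,1},\dots,y_{l,i_l-1}$ has degree at most $i_l-1$; expanding the $i_l$-th power by the multinomial theorem, every surviving term would have to be squarefree of degree $i_l$, which is impossible by pigeonhole. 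Hence $\varphi$ is a well-defined ring map, and $\varphi(I_A)\subseteq I_B$ is immediate since each $\varphi(x_l)\in I_B$.

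Next I would apply Corollary \ref{P.D.-hom} with $T=\{x_1,\dots,x_n\}$, the chosen generating set of $I_A$. Since $\gamma_p(x_l)=0$ by construction, it remains only to verify $\delta_p(\varphi(x_l))=\varphi(\gamma_p(x_l))=0$, that is $\delta_p(\sum_{j=1}^{i_l-1}y_{l,j})=0$. The key computation is that $\delta_e(y_{l,j})=0$ for every $e$ with $2\le e\le p$: for $e=p$ this holds by construction of $\delta$, while for $2\le e\le p-1$ the divided power axioms give $y_{l,j}^e=e!\,\delta_e(y_{l,j})$, and the left side vanishes because $y_{l,j}^2=0$, whereas $e!$ is invertible in $k$ as $e<p$. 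Iterating the additivity axiom yields $\delta_p(\sum_{j=1}^{i_l-1}y_{l,j})=\sum_{e_1+\dots+e_{i_l-1}=p}\prod_{j}\delta_{e_j}(y_{l,j})$; a term is nonzero only if every $e_j\in\{0,1\}$, forcing $p=\sum_j e_j\le i_l-1\le p-1$, a contradiction. Thus every term vanishes and $\varphi$ is a homomorphism of divided power rings.

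For injectivity I would compute $\varphi$ on the monomial basis $\{x_1^{a_1}\cdots x_n^{a_n}: 0\le a_l<i_l\}$ of $A$. Using $y_{l,j}^2=0$ one gets $(\sum_{j=1}^{i_l-1}y_{l,j})^{a_l}=a_l!\,e_{a_l}(y_{l,1},\dots,y_{l,i_l-1})$, where $e_{a_l}$ is the elementary symmetric polynomial of degree $a_l$; this is nonzero because $a_l\le i_l-1$ and $a_l!$ is a unit (as $a_l<p$). Hence $\varphi(x_1^{a_1}\cdots x_n^{a_n})$ is a nonzero scalar multiple of a sum of squarefree monomials, each of which uses exactly $a_l$ of the variables in the $l$-th block. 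Two distinct exponent vectors differ in some block $l$, so the squarefree monomials occurring in their images use different numbers of $l$-th block variables and are therefore disjoint as subsets of the monomial basis of $B$. Consequently the images of the basis of $A$ have pairwise disjoint supports, hence are linearly independent, giving injectivity.

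The main obstacle is the middle step: one must control $\delta_e(y_{l,j})$ for all intermediate $e$, not merely $e=p$, and it is precisely the hypothesis $i_l\le p$ (equivalently $i_l-1<p$) that makes the cross terms in $\delta_p$ of a sum vanish. The bounds $2\le i_j\le p$ are used throughout: the lower bound guarantees the symmetric polynomials $e_{a_l}$ are genuinely nonzero, while the upper bound both forces the divided power values to vanish and keeps all relevant factorials invertible in $k$.
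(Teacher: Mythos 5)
Your proof is correct and follows essentially the same route as the paper's: both hinge on the identity $(\sum_j y_{l,j})^s = s!\,e_s(y_{l,1},\dots,y_{l,i_l-1})$ in $B$ and on the vanishing of all terms in the expansion of $\delta_p$ of a sum of $i_l-1\le p-1$ square-zero elements. The only cosmetic difference is that the paper packages injectivity as the identification of $A$ with the ring of invariants of $S_{i_1-1}\times\cdots\times S_{i_n-1}$ acting on $B$, whereas you argue directly via the pairwise disjoint supports of the images of the monomial basis.
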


\begin{proof}
Let us take  an integer $2\le m<p$ and note that if $S_m$ acts on
$C_m=k[z_1,...,z_m]/(z_1^2,...,z_m^2)$ by permutation of variables
then the subring of invariants $C_m^{S_m}$ is spanned by
$(z_1+...+z_m)$. Moreover, this subring is isomorphic to
$k[t]/(t^{m+1})$ with isomorphism given by mapping $t$ to
$(z_1+...+z_m)$. To see this one needs to note that in $C_m$ we
have
$$(z_1+...+z_m)^s=s! \sum_{\substack{{l_1+...+l_m = s} \\ l_1,...,l_m\le 1}} z_1^{l_1}...z_m^{l_m} ,$$
which is zero precisely for $s>m$.

Now we have an action of $S_{i_1-1}\times ...\times S_{i_n-1}$ on
$B\simeq C_{i_1-1}\otimes ...\otimes C_{i_n-1}$ and the ring of
invariants is isomorphic  to $A\simeq C_{i_1-1}^{S_{i_1-1}}\otimes
...\otimes C_{i_n-1}^{S_{i_n-1}}$ with the isomorphism induced by
$\varphi: A\to B$. Clearly, we have $\varphi (I_A)\subset I_B$. By
Corollary \ref{P.D.-hom} to check that $\varphi$ is a homomorphism
of divided power rings it is sufficient to check that $\delta _p
(\varphi (x_l))=\varphi(\gamma _p(x_l))=0$ for $l=1,...,n$. But
since $i_l\le p$ we have
$$\delta _p (\varphi(x_l))= \delta _p (\sum _{j=1}^{i_l-1}y_{l,j})=\sum _{j=1}^{i_l-1}\delta _p(y_{l,j})+
\sum_{\substack{{s_1+...+s_{i_l-1} = p} \\ s_1,...,s_{i_l-1}\le
1}} y_{l,1}^{s_1}...y_{l,i_l-1}^{s_{i_l-1}}=0.$$
\end{proof}

\section{General results on divided power rings in equi-characteristic
  case}

Let $R$ be a ring in which $(p-1)!$ is invertible and let $f\in
R[x_1,...,x_n]$ be a polynomial. Let us write $f$ as a sum
$\sum_{i=1}^m a_{i}x^{J_i}$ of distinct monomials (where $J_i$ are multi-indices). Then we set
$$w_p(f):=\sum _{\sum_{j=1}^m l_j=p,\, l_j<p}\frac{1}{l_1!...l_m!}
(a_{1}x^{J_1})^{l_1}... (a_{m}x^{J_m})^{l_m} .$$ This polynomial
 appears naturally in  the computation of $f^p$ and it plays an important
role in the study of divided power structures.

\begin{proposition}\label{D.P.-criterion}
  Let $k$ be a ring of characteristic $p>0$. Let $m_{0}$ be the maximal ideal $(x_1,...,x_n)$ in  $k[x_1,...,x_n]$ and and let us
  take an ideal  $I\subset m_{0}^2$. Let us consider
  $A_0=k[x_1,...,x_n]/((x_1^{i_1},...,x_n^{i_n})+I)$, where $i_j\le
  p$ for all $j$.  Then the following conditions are equivalent:
\begin{enumerate}
\item $m_{0}A_0$ admits a divided power structure,
\item $ w_p(f_0) \in (x_1^{i_1},...,x_n^{i_n})+I$ for all $f_0\in
I$,
\item if $I$ is generated by some subset $T_0$ then
$ w_p(f_0) \in (x_1^{i_1},...,x_n^{i_n})+I$ for  all $f_0\in T_0$.
\end{enumerate}
\end{proposition}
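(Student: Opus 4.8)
The plan is to prove the equivalence of the three conditions by relating the existence of a divided power structure on $m_0 A_0$ to the behaviour of the map $\delta = \gamma_p$ on generators of the ideal $I$. The key observation is that the quotient $A_0 = k[x_1,\dots,x_n]/((x_1^{i_1},\dots,x_n^{i_n})+I)$ sits between two rings: the locally complete intersection quotient $\bar A = k[x_1,\dots,x_n]/(x_1^{i_1},\dots,x_n^{i_n})$, which by Lemma \ref{P.D.-lci} carries a divided power structure on $(x_1,\dots,x_n)\bar A$ with $\gamma_p(x_i)=0$, and the further quotient by $I$. So the natural strategy is to transport the divided power structure from $\bar A$ to $A_0$ and determine exactly when this descent is possible.

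First I would set up the descent criterion. Existence of a divided power structure on $m_0 A_0$ is equivalent, via Corollary \ref{PD-pI=0} (applied with $pA_0 = 0$), to giving a $k$-linear map compatible with the defining relations; but more usefully, by Lemma \ref{equiv-P.D.-maps} such a structure is encoded by a single map $\delta: m_0A_0 \to m_0 A_0$ satisfying the three axioms there, with $p!\,\delta(x) = x^p$. Since $(p-1)!$ is invertible and $p=0$ in $k$, the condition $p!\,\delta(x)=x^p$ forces $x^p = 0$ for all $x\in m_0A_0$; this is the crux. The polynomial $w_p(f)$ is designed precisely so that, for $f\in k[x_1,\dots,x_n]$, one has $f^p \equiv (\text{sum of monomial } p\text{-th powers}) + p!\,w_p(f)$, and modulo the locally complete intersection ideal $(x_1^{i_1},\dots,x_n^{i_n})$ the pure $p$-th power terms $a_i^p x^{pJ_i}$ vanish whenever any exponent reaches $i_j\le p$. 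Hence in $\bar A$ the class of $f^p$ equals $p!\,w_p(f) = $ (up to the invertible $(p-1)!$) a multiple of $w_p(f)$. The implication $(1)\Rightarrow(2)$ then follows: if a divided power structure exists on $m_0A_0$, every element $f_0\in I$ maps to $0$ in $A_0$, so its $p$-th power is $0$, forcing $w_p(f_0)\in (x_1^{i_1},\dots,x_n^{i_n})+I$.

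For the reverse direction $(2)\Rightarrow(1)$, I would argue that the divided power structure $\tilde\gamma^0$ on $\bar A$ (with $\tilde\gamma^0_p(x_i)=0$) descends to $A_0$ precisely when $I A_0$ is a sub-divided-power ideal, i.e. stable under $\gamma_p$; by the standard criterion \cite[Lemma 3.5, Lemma 3.6]{BO} this reduces to checking $\gamma_p$ on a generating set of $I$, and $\gamma_p(f_0)$ is represented in $\bar A$ exactly by $w_p(f_0)$ up to the invertible constant. So the condition $w_p(f_0)\in (x_1^{i_1},\dots,x_n^{i_n})+I$ for all $f_0\in I$ says that $\gamma_p(I)\subset I$ in $\bar A$, which is what makes $I$ a sub-D.P. ideal and yields the induced structure on $m_0A_0$. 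The equivalence $(2)\Leftrightarrow(3)$ is the routine observation that being a sub-divided-power ideal can be tested on generators, using the additivity axiom (3) of Lemma \ref{equiv-P.D.-maps} together with the fact that products $x^i y^{p-i}$ of lower $p$-th-power-weight lie in $I$ once the generators do, so closure under $\gamma_p$ propagates from $T_0$ to all of $I$.

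The main obstacle I anticipate is the precise bookkeeping in the relation between $f^p$ and $w_p(f)$ inside $\bar A$, and correctly accounting for the mixed cross-terms when verifying that $\gamma_p(I)\subset I$ can be tested on generators. Specifically, for $f_0 = \sum_t c_t g_t$ with $g_t\in T_0$, the additivity formula expands $\gamma_p(f_0)$ into $\sum_t c_t^p\,\gamma_p(g_t)$ plus cross terms of the shape $\frac{1}{l_1!\cdots l_r!}\prod (c_t g_t)^{l_t}$ with each $l_t<p$; each such cross term is visibly a multiple of some $g_t\in I$, hence lies in $I$, while the pure terms lie in $I$ by hypothesis $(3)$. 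Getting the invertibility of the coefficients $\frac{1}{l_1!\cdots l_r!}$ to make sense requires exactly the hypothesis that $(p-1)!$ is invertible, and this is the point where the combinatorial identities (Lucas, Vandermonde) guarantee that no denominator secretly vanishes modulo $p$.
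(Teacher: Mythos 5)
Your overall strategy coincides with the paper's: equip $\bar A = k[x_1,\dots,x_n]/(x_1^{i_1},\dots,x_n^{i_n})$ with the canonical divided power structure $\tilde\gamma$ satisfying $\tilde\gamma_p(x_i)=0$ from Lemma \ref{P.D.-lci}, descend it to $A_0$ by showing $I\bar A$ is a sub-D.P.\ ideal via \cite[Lemmas 3.5 and 3.6]{BO}, and reduce to generators. The direction $(2)\Rightarrow(1)$ and the equivalence $(2)\Leftrightarrow(3)$ are essentially correct as you sketch them.

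However, your argument for $(1)\Rightarrow(2)$ contains a genuine error. You derive $w_p(f_0)\in(x_1^{i_1},\dots,x_n^{i_n})+I$ from ``$f_0=0$ in $A_0$, so $f_0^p=0$,'' via the identity $f^p=\sum a_i^px^{pJ_i}+p!\,w_p(f)$. In characteristic $p$ this identity says nothing about $w_p$: since $p!=0$ in $k$, the term $p!\,w_p(f)$ vanishes identically, and $f_0^p=\sum a_i^p x^{pJ_i}$ already lies in $(x_1^{i_1},\dots,x_n^{i_n})$ for trivial reasons (every monomial of $f_0$ contains some $x_j$ with positive exponent, so $x^{pJ_i}$ is divisible by $x_j^p\in(x_j^{i_j})$). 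Thus ``$f_0^p=0$'' imposes no condition on $w_p(f_0)$, and your assertion that the class of $f^p$ equals $p!\,w_p(f)$ up to the invertible $(p-1)!$ reduces to $0=0$. The actual constraint comes from $\gamma_p$, not from the $p$-th power map: since $f_0=0$ in $A_0$ we have $\gamma_p(f_0)=0$, and iterating the additivity axiom (3) of Lemma \ref{equiv-P.D.-maps} over the monomials of $f_0$ gives $\gamma_p(f_0)=\sum_i a_i^p\gamma_p(x^{J_i})+w_p(f_0)$ \emph{exactly} (no constant to track). Here the hypothesis $I\subset m_0^2$, which you never invoke, is essential: each $x^{J_i}$ factors as $x_j\cdot u$ with $u\in m_0$, so $\gamma_p(x^{J_i})=x_j^p\gamma_p(u)=0$ for \emph{any} divided power structure on $m_0A_0$, whence $w_p(f_0)=0$ in $A_0$. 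Without $|J_i|\ge 2$ the terms $\gamma_p(x^{J_i})$ would depend on the unknown structure and the implication would fail. A minor further point: Lucas's theorem and Vandermonde's identity play no role in this proposition; the denominators $l_1!\cdots l_m!$ with all $l_j<p$ are invertible simply because each factor is a product of integers smaller than $p$.
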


\begin{proof}
  Let us take some $f_0\in I$ and
  write $f_0=\sum_{i=1}^m a_{i}x^{J_i}$, where $J_i$ are
  distinct multiindices with $|J_i|\ge 2$.

Assume that  $m_{0}A_0$ has a divided power structure $\gamma$.
Note that $\gamma _p (x_ix_j)=x_i^p\gamma _p (x_j)=0$. Similarly, since $|J_i|\ge 2$
we get $\gamma _p (x^{J_i})=0$.
Then
$$0=\gamma _p(f_0)=\sum \gamma _p (a_i x^{J_i}) +w_p(f_0)=\sum a_i^p\gamma _p ( x^{J_i}) +w_p(f_0)=w_p(f_0)$$
in $A_0$. Hence $w_p(f_0)\in (x_1^{i_1},...,x_n^{i_n})+I$, which
proves that 1 implies 2. Obviously 2 implies 3. To prove that 3
implies 1 let us set $B_0=k[x_1,...,x_n]/(x_1^{i_1},...,x_n^{i_n})$
and assume that $w_p(f_0)\in (x_1^{i_1},...,x_n^{i_n})+I$ for all
$f_0\in T$.  By Lemma \ref{P.D.-lci} the ideal $m_0B_0$ has a unique
divided power structure $\tilde \gamma$ such that $\tilde \gamma
(x_i)=0$ and the canonical map $(k, 0, 0)\to (B_0, m_0B_0, \tilde
\gamma)$ is a homomorphism of divided power rings. By the same
computation as above we have
$$\tilde \gamma _p(f_0)=\sum \tilde \gamma _p (a_i x^{J_i}) +w_p(f_0)=w_p(f_0)\in IB_0$$
and hence \cite[Lemma 3.6]{BO} implies that $IB_0$ is a sub-D.P.
ideal of $m_0B$. Therefore by \cite[Lemma 3.5]{BO} there exists a
unique divided power structure $\gamma$ on $m_0A_0$ such that
$(B_0, m_0B_0, \tilde \gamma)\to (A_0, m_0A_0, \gamma)$ is a
homomorphism of divided power algebras.
\end{proof}

\begin{corollary}
  Let $k$ be a ring of characteristic $p>0$. Let
  $I \subset (x_1,...,x_n)^2\subset  k[x_1,...,x_n]$ and $J\subset (y_1,...,y_{n'})^2\subset k[y_1,...,y_{n'}]$ be ideals and let $\pi_x: k[x_1,...,x_n, y_1,...,y_{n'}]
  \to k[x_1,...,x_n]$ and $\pi_y: k[x_1,...,x_n, y_1,...,y_{n'}]\to k[ y_1,...,y_{n'}]$ be the canonical projections.
  Let $i_1,...,i_n, j_1,...,j _{n'}$ be positive integers less or equal to $p$.
If the ideal $(x_1,...,x_n,y_1,...,y_{n'})$ in
  $k[x_1,...,x_n, y_1,...,y_{n'}]/((x_1^{i_1},...,x_n^{i_n}, y_1^{j_1},...,
  y_{n'}^{j_{n'}})+ \pi_x^{-1}I+\pi_y^{-1}J)$
admits a divided power structure then
$(x_1,...,x_n)$ in $k[x_1,...,x_n]/((x_1^{i_1},...,x_n^{i_n})+ I)$
and
$(y_1,...,y_{n'})$ in $k[y_1,...,y_{n'}]/((y_1^{j_1},...,y_n^{j_{n'}})+J)$
also admit a divided power structure.
\end{corollary}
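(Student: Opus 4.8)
The plan is to deduce the conclusion from the numerical criterion in Proposition \ref{D.P.-criterion} by pushing membership relations forward along the two projections. Abbreviate $k[x]=k[x_1,\dots,x_n]$, $k[y]=k[y_1,\dots,y_{n'}]$ and $k[x,y]=k[x_1,\dots,x_n,y_1,\dots,y_{n'}]$. Write $\mathfrak m=(x_1,\dots,x_n,y_1,\dots,y_{n'})$, $\mathfrak a=(x_1^{i_1},\dots,x_n^{i_n},y_1^{j_1},\dots,y_{n'}^{j_{n'}})$ and $K=\pi_x^{-1}I+\pi_y^{-1}J$, so that the large ring is $k[x,y]/(\mathfrak a+K)$. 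Because $I\subset(x_1,\dots,x_n)^2$ and $J\subset(y_1,\dots,y_{n'})^2$ we have $K\subset\mathfrak m^2$, and all the exponents $i_l,j_l$ are at most $p$; hence Proposition \ref{D.P.-criterion} applies, and the hypothesis that $\mathfrak m$ carries a divided power structure is equivalent to the assertion that $w_p(f_0)\in\mathfrak a+K$ for every $f_0\in K$.

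First I would record how the projection $\pi_x$ acts on the defining ideal. The ideal $\mathfrak a+K$ is generated by the $x_l^{i_l}$, the $y_l^{j_l}$, the generators of $I$ (polynomials in the $x$'s, extended to $k[x,y]$), and the generators of $J$ (polynomials in the $y$'s). Since $\pi_x$ is the identity on $k[x]$ and sends every $y_l$ to $0$, it fixes each $x_l^{i_l}$ and each generator of $I$, while it kills each $y_l^{j_l}$ (using $j_l\ge 1$) and each generator of $J$ (using $J\subset(y_1,\dots,y_{n'})^2$). As $\pi_x$ is surjective, it carries $\mathfrak a+K$ onto the ideal $(x_1^{i_1},\dots,x_n^{i_n})+I$, which is precisely the defining ideal of the first factor $A_0=k[x]/((x_1^{i_1},\dots,x_n^{i_n})+I)$.

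Now I would verify condition (2) of Proposition \ref{D.P.-criterion} for $A_0$. Fix $g_0\in I$; regarded inside $k[x,y]$ it lies in $\pi_x^{-1}I\subset K$, so the equivalence of the first paragraph gives $w_p(g_0)\in\mathfrak a+K$. The point is that $w_p(g_0)$ is built out of the monomials occurring in $g_0$, which involve only $x_1,\dots,x_n$; thus $w_p(g_0)\in k[x]$ and is fixed by $\pi_x$. Applying $\pi_x$ to the membership $w_p(g_0)\in\mathfrak a+K$ and invoking the second paragraph yields $w_p(g_0)=\pi_x\big(w_p(g_0)\big)\in(x_1^{i_1},\dots,x_n^{i_n})+I$. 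Since $g_0\in I$ was arbitrary, Proposition \ref{D.P.-criterion} shows that the maximal ideal of $A_0$ admits a divided power structure. Running the same argument with $\pi_y$ in place of $\pi_x$ settles the second factor, and this completes the proof.

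The only step carrying real content — everything else being bookkeeping about the action of $\pi_x$ and $\pi_y$ on generators — is the observation that $w_p$ of a polynomial lying in $k[x]$ stays in $k[x]$. This is what makes the pushforward along $\pi_x$ turn the criterion for the large ring into exactly the criterion for the factor, and I do not expect any genuine obstacle beyond it.
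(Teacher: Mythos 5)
Your proof is correct and follows essentially the same route as the paper: both reduce the statement to condition (2)/(3) of Proposition \ref{D.P.-criterion} for the big ring and for each factor, and both transport the membership $w_p(\cdot)\in\mathfrak a+K$ down to $(x_1^{i_1},\dots,x_n^{i_n})+I$ by applying $\pi_x$ (resp.\ $\pi_y$). The only cosmetic difference is that the paper applies the criterion to a general element $f(x)+g(y)$ of $K$ and uses $\pi_x\bigl(w_p(f(x)+g(y))\bigr)=w_p(f(x))$, whereas you apply it directly to $f(x)\in I\subset K$, for which $w_p(f(x))$ is already fixed by $\pi_x$.
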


\begin{proof}
Let us take some polynomials $f(x)\in I$ and $g(y)\in J$.
The canonical projection  $\pi_x$ maps $w_p(f(x)+g(y))$ to $w_p (f(x))$
and  $\pi_y$ maps $w_p(f(x)+g(y))$ to $w_p (g(y))$, so the
corollary follows directly from Proposition \ref{D.P.-criterion}.
\end{proof}

\medskip

  \begin{theorem} \label{lift-criterion2} 
Let $(R, m_R)$ be a local  ring with residue field $k$ of characteristic $p>0$..  
Let us assume that $pR\ne 0$, $pm_R=0$ and $m_R^{e+1}=0$ for some $1\le e\le q-1$.
Let us take an ideal $I\subset k[x_1,...,x_n]$ and some polynomial $f_0\in I$ such that for each multiindex
    $(l_1,...,l_n)$ of a monomial occurring in $f_0$ we have
$$\left\lfloor \frac{ql_1}{i_1}\right\rfloor+...+\left\lfloor \frac{ql_n}{i_n}\right\rfloor \ge e+1.$$
Let us set $A_0=k[x_1,...,x_n]/((x_1^{i_1},...,x_n^{i_n})+I)$, where $i_j\le q=p^r$ for all $j$.
If $A_0$ is liftable to the canonical projection $R\to k=R/m_R$ then
\[w_p(f_0^{p^{r-1}})\in (x_1^{i_1},...,x_n^{i_n})+I.\]
\end{theorem}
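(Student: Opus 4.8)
The plan is to work inside a flat lift and exploit that, under the stated hypotheses, multiplication by $p$ identifies $A_0$ with $p\tilde A$.

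First I would set up the lift. Suppose $A_0$ lifts to $R\to k$; choose a flat $R$-algebra $\tilde A$ with $\tilde A\otimes_R k\cong A_0$ and lift the generators $x_1,\dots,x_n$ to elements of $\tilde A$, giving an $R$-algebra homomorphism $R[x_1,\dots,x_n]\to\tilde A$ (one may invoke Lemma \ref{Hartshorne} for an explicit presentation). Write $f_0=\sum_i a_ix^{J_i}$ with $a_i\in k$, $J_i=(l_{i1},\dots,l_{in})$, choose lifts $\tilde a_i\in R$, and let $\tilde f_0=\sum_i\tilde a_ix^{J_i}$, viewed in $\tilde A$. Since $f_0\in I$ and each $x_j^{i_j}$ lies in $(x_1^{i_1},\dots,x_n^{i_n})$, both $\tilde f_0$ and each $x_j^{i_j}$ reduce to $0$ in $A_0=\tilde A/m_R\tilde A$, hence lie in $m_R\tilde A$. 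Because $pR\ne 0$ and $pm_R=0$, the ideal $pR$ is a nonzero cyclic $R$-module killed by $m_R$, so $pR\cong k$; flatness of $\tilde A$ then yields the standard isomorphism $\mu\colon A_0\xrightarrow{\ \sim\ }p\tilde A$, $\bar u\mapsto p\tilde u$ (well defined as $p\,m_R\tilde A=0$). Injectivity of $\mu$ is what will let me turn a $p$-divisible identity in $\tilde A$ into an equality in $A_0$.

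Next I record two vanishings in $\tilde A$. Since $\tilde f_0\in m_R\tilde A$ and $q=p^r\ge e+1$, we get $\tilde f_0^{\,q}\in m_R^{q}\tilde A\subseteq m_R^{e+1}\tilde A=0$. For the second, fix a monomial $x^{J_i}$ of $f_0$ and write, for each $j$, $x_j^{q l_{ij}}=(x_j^{i_j})^{\lfloor q l_{ij}/i_j\rfloor}\,x_j^{\,q l_{ij}\bmod i_j}$; since $x_j^{i_j}\in m_R\tilde A$, taking the product over $j$ gives $x^{qJ_i}\in m_R^{\,N}\tilde A$ with $N=\sum_j\lfloor q l_{ij}/i_j\rfloor\ge e+1$, whence $x^{qJ_i}=0$ in $\tilde A$. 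This is exactly where the floor hypothesis enters.

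Then I would introduce the polynomial that makes $w_p$ appear. Put $g_0=f_0^{p^{r-1}}=\sum_i a_i^{p^{r-1}}x^{K_i}\in k[x]$ with $K_i=p^{r-1}J_i$, and lift it to $\tilde g_0=\sum_i\tilde a_i^{p^{r-1}}x^{K_i}$. On one hand, $\tilde f_0^{p^{r-1}}-\tilde g_0$ has all coefficients in $pR$ (the mixed multinomial coefficients of $\tilde f_0^{p^{r-1}}$ are divisible by $p$ since $p^{r-1}$ is a prime power), so writing $\tilde f_0^{p^{r-1}}-\tilde g_0=p\eta$ and using $p^2=0$ we obtain
\[
\tilde g_0^{\,p}=\bigl(\tilde f_0^{\,p^{r-1}}-p\eta\bigr)^{p}=\tilde f_0^{\,q}.
\]
On the other hand, the defining expansion of the $p$-th power (the identity in which $w_p$ naturally occurs) reads
\[
\tilde g_0^{\,p}=\sum_i \tilde a_i^{\,q}\,x^{qJ_i}+p!\,w_p(\tilde g_0),
\]
because $pK_i=qJ_i$. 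In $\tilde A$ the left-hand side is $\tilde f_0^{\,q}=0$ and the sum over $i$ vanishes by the second vanishing above, leaving $p!\,w_p(\tilde g_0)=0$ in $\tilde A$. Finally I would read off the conclusion through $\mu$: writing $p!\,w_p(\tilde g_0)=p\cdot\bigl((p-1)!\,w_p(\tilde g_0)\bigr)$ and using $pm_R=0$, this element equals $\mu\bigl((p-1)!\,h\bigr)$, where $h=w_p(g_0)=w_p(f_0^{p^{r-1}})$ is the reduction of $w_p(\tilde g_0)$ modulo $m_R$ (as $w_p$ is a universal polynomial in the coefficients and $\tilde g_0$ reduces to $g_0$). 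Since $\mu$ is injective and $(p-1)!$ is a unit in $k$, the relation $\mu((p-1)!\,h)=0$ forces $h=0$ in $A_0$, i.e.\ $w_p(f_0^{p^{r-1}})\in(x_1^{i_1},\dots,x_n^{i_n})+I$. The main obstacle is conceptual rather than computational: one must recognize that the right object to expand is $\bigl(\tilde f_0^{p^{r-1}}\bigr)^p$, so that the floor hypothesis is precisely calibrated to annihilate the ``pure'' monomials $x^{qJ_i}$ while the surviving $w_p$-part is automatically $p$-divisible and hence detected in $p\tilde A\cong A_0$; correctly erecting the multiplication-by-$p$ isomorphism under $pm_R=0$, $pR\ne0$ is the other delicate point.
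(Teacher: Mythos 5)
Your proof is correct and follows essentially the same route as the paper's: lift $f_0$ into a flat lifting $A$ of $A_0$, use the hypotheses to kill $\tilde f_0^{\,q}$ and the monomials $x^{qJ_i}$ in $m_R^{e+1}A=0$, and detect the surviving $p$-divisible term $p!\,w_p(\cdot)$ via the injection $A_0\simeq k\otimes_R A\hookrightarrow A$ given by multiplication by $p$. The only difference is organizational: where the paper peels off one $p$-th power at a time by induction on $r$, you lift $f_0^{p^{r-1}}$ directly and invoke $p^2=0$ once, which if anything tracks the coefficients $\tilde a_i^{\,p^{r-1}}$ more cleanly than the paper's inductive step.
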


\begin{proof}
  Assume that $A_0$ is liftable to $R\to k$ and let $\pi: A\to A/m_RA=
  A_0$ denote the corresponding projection. By Lemma \ref{Hartshorne}
  we can assume that $A$ is a quotient of $R[x_1,...,x_n]$ and $\pi$
  lifts to a map $R[x_1,...,x_n]\to k[x_1,...,x_n]$.  Let us write
  $f_0$ as a sum of monomials $\sum_{i=1}^m a_{i}x^{J_i}$.  Let us choose some
  $b_{i}\in R$ lifting $a_{i}\in k$ and let us set $f=\sum_{i=1}^m
  b_{i}x^{J_i}$. By construction we have $\pi (f)=f_0=0$ in $A_0$, so
  $f\in m_RA$.  This implies $f^q\in m_R^qA=0$. Similarly, we have
  $x_j^{i_j}\in m_R A$ as $x_j^{i_j}=0$ in $A_0$. Then our assumptions
  on $e$ and $J_i$ imply that $x^{qJ_i}\in m_R^{e+1}A=0$ for
  $i=1,...,m$.  Hence computing in $A$ we get
\[
0=f^q=\left(\sum_{i=1}^m b_{i}x^{J_i}\right)^q=(\sum_{i=1}^m
b_{i}^px^{pJ_i}+p!w_p(f))^{p^{r-1}}.
\]
If $r=1$ then we have
\[
0=(\sum_{i=1}^m b_{i}^px^{pJ_i}+p!w_p(f))^{p^{r-1}}=p!w_p(f).
\]
Note that $pR\subset m_R$ and $pm_R=0$, so $p^2R=0$. So if $r>1$ then we obtain
\[
0=(\sum_{i=1}^m b_{i}^px^{pJ_i}+p!w_p(f))^{p^{r-1}}=(\sum_{i=1}^m b_{i}^px^{pJ_i})^{p^{r-1}}.
\]
By induction replacing $q=p^r$ by $p^{r-1}$ and $\sum_{i=1}^m b_{i}x^{J_i}$ by $\sum_{i=1}^m b_{i}^px^{pJ_i}$
we eventually get
\[
p!w_p(f(x_1^{p^{r-1}} ,..., x_n^{p^{r-1}}))=0
\]
Note that $p:R\to R$ factors through $k=R/m_R\to R$ and this last map
is injective as $pR\ne 0$. Therefore from $R$-flatness of $A$, the map
$\tau: A_0\simeq k\otimes _R{A} \to {A}$ is also injective and by
construction $\tau\pi=p$.  But we have
\[ \tau ((p-1)!\,  w_p(f_0(x_1^{p^{r-1}} ,..., x_n^{p^{r-1}}))) =
\tau \pi ((p-1)!\,w_p(f(x_1^{p^{r-1}} ,..., x_n^{p^{r-1}})) = p! w_p(f(x_1^{p^{r-1}} ,..., x_n^{p^{r-1}}))=0.
\]
So we have $w_p(f_0^{p^{r-1}})=w_p(f_0(x_1^{p^{r-1}} ,..., x_n^{p^{r-1}}))=0$ in $A_0$ and hence
$w_p(f_0^{p^{r-1}})\in (x_1^{i_1},...,x_n^{i_n})+I.$
\end{proof}

\medskip

\begin{remark}
In the special case when $I$ is a principal ideal and
$i_1=...=i_n=q$ the above theorem has the following geometric
reformulation. Let $X:=(f_0=0)\subset \AA_k^n$ be a hypersurface
with multiplicity $\ge (e+1)$ at $0$. If $q=p^r$ and the $r$-th Frobenius
neighbourhood $Y_r:=\Spec k[x_1,...,x_n]/(x_1^q,...,x_n^q,f_0)$ of
$0$ is liftable to any local ring $R\to k$ then $w_p(f_0^{p^{r-1}})=0\in k[Y_r].$
\end{remark}

\medskip

\begin{corollary} \label{lift-criterion} Let $(R, m_R)$ be a noetherian local
  ring with $pR\ne 0$ and residue field $k$.  Let us assume that
  $m_R^{e+1}=0$ for some $1\le e\le p-1$.  Let $T_0$ be a set
  generating some ideal $I\subset k[x_1,...,x_n]$ and let us assume that for each $f_0\in T_0$ and each
  multiindex $(l_1,...,l_n)$ of a monomial occurring in $f_0$ we have
$$\left\lfloor \frac{pl_1}{i_1}\right\rfloor+...+\left\lfloor \frac{pl_n}{i_n}\right\rfloor \ge e+1.$$
Let us set $A_0=k[x_1,...,x_n]/((x_1^{i_1},...,x_n^{i_n})+I)$, where $i_j\le p$ for all $j$.
If $A_0$ is liftable to $R\to k$ then $m_0A_0$
has a divided power structure.
\end{corollary}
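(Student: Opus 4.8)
The plan is to derive the corollary from Theorem~\ref{lift-criterion2} applied with $q=p$ (so that $r=1$ and $w_p(f_0^{p^{r-1}})=w_p(f_0)$) to each element of the generating set $T_0$, and then to feed the resulting conditions into Proposition~\ref{D.P.-criterion}. The only gap between the two sets of hypotheses is that here we assume merely $pR\ne 0$ and $m_R^{e+1}=0$, whereas Theorem~\ref{lift-criterion2} also requires $pm_R=0$. So the real content of the proof is a reduction disposing of this extra assumption. Note first that $R$ is local Artin, being noetherian with nilpotent maximal ideal.

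First I would reduce to the case $pm_R=0$ by passing to $R'=R/pm_R$. As a quotient of the Artin ring $R$ by the ideal $pm_R\subset m_R$, the ring $R'$ is again local Artin with residue field $k$ and maximal ideal $m_{R'}=m_R/pm_R$; by construction $pm_{R'}=0$ and $m_{R'}^{e+1}=0$. One also checks $pR'\ne 0$: if $p\in pm_R$, say $p=pm$ with $m\in m_R$, then $p(1-m)=0$ with $1-m$ a unit of the local ring $R$, forcing $pR=0$ against our assumption. Finally, liftability descends along $R\to R'$: if $\tilde A$ is a flat lifting of $A_0$ to $R\to k$, then $\tilde A\otimes_R R'$ is flat over $R'$ and satisfies $(\tilde A\otimes_R R')\otimes_{R'}k=\tilde A\otimes_R k=A_0$, with $R\to R'\to k$ the original map $R\to k$; hence $A_0$ lifts to $R'\to k$ as well. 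I may therefore replace $R$ by $R'$ and assume $pm_R=0$.

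With this assumption in force, for each $f_0\in T_0$ all the hypotheses of Theorem~\ref{lift-criterion2} with $q=p$ hold --- the bound on the multiindices of the monomials of $f_0$ is exactly the stated inequality with $q=p$ --- so liftability of $A_0$ yields
\[
w_p(f_0)=w_p(f_0^{p^{r-1}})\in (x_1^{i_1},\dots,x_n^{i_n})+I
\]
for every generator $f_0\in T_0$. This is precisely condition (3) of Proposition~\ref{D.P.-criterion} (which requires $I\subset m_0^2$; the assumed inequality forces every monomial of each $f_0$ to have positive total degree, and in the situations of interest, e.g.\ $i_j=p$, degree $\ge 2$). That proposition then produces the desired divided power structure on $m_0A_0$.

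I expect the reduction $R\to R/pm_R$ to be the main point: the rest is a direct appeal to Theorem~\ref{lift-criterion2} and Proposition~\ref{D.P.-criterion}. The bookkeeping that $R'$ retains $pR'\ne 0$ together with the two vanishing conditions $pm_{R'}=0$ and $m_{R'}^{e+1}=0$, and that liftability descends along $R\to R'$, is exactly what lets us remove the hypothesis $pm_R=0$ present in Theorem~\ref{lift-criterion2}. (Alternatively, one could avoid the reduction by redoing the final step of that theorem's proof for $r=1$ directly: flatness gives $(p-1)!\,w_p(f)\in\operatorname{Ann}_A(p)=\operatorname{Ann}_R(p)\cdot A\subseteq m_RA$ since $\operatorname{Ann}_R(p)\subseteq m_R$ automatically, whence $(p-1)!\,w_p(f_0)=0$ in $A_0$; but invoking the theorem after the reduction keeps the argument self-contained.)
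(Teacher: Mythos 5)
Your proposal is correct and follows exactly the paper's own route: the paper's proof is the one-line reduction ``replacing $R$ by $R/pm_R$ and using Nakayama's lemma we can assume that $pm_R=0$,'' followed by an appeal to Theorem~\ref{lift-criterion2} (with $q=p$, $r=1$) and Proposition~\ref{D.P.-criterion}. You have simply spelled out the details the paper leaves implicit (why $pR'\ne 0$, why liftability descends to $R'$, and the implicit hypothesis $I\subset m_0^2$ needed for Proposition~\ref{D.P.-criterion}), all of which check out.
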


\begin{proof} 
Replacing $R$ by $R/pm_R$ and using Nakayama's lemma we can assume that $pm_R=0$. 
Then the required assertion follows from Proposition \ref{D.P.-criterion} and the above theorem.
\end{proof}

\medskip

Note that if $A_0$ is liftable to $R\to k$ then the proof of Proposition \ref{D.P.-criterion} gives  
the same divided power structure on $m_0A_0$ independently of a lifting.

\medskip

Let $V$ be a discrete valuation ring of unequal characteristic $p$ and with
uniformizing parameter $\pi$. Let us assume that $(\pi)$ has a divided
power structure.  By \cite[Example 3.2.3]{BO} this is equivalent to
$e\le p-1$, where $e$ is the absolute ramification index of $V$. Then
$R=V/(\pi^{e+1})$ is a local ring satisfying $pR=(\pi ^e)\ne 0$ and
$m_R^{e+1}=0$.

Let $k$ be a field of characteristic $p>0$ and let $R$ be some ring
with surjection $R\to k$. Assume that some $k$-algebra $A_0$ is
liftable to $R\to k$. Since flatness is preserved under localization,
$A_0$ is liftable to the localization $R'=R_{(p-1)!}\to k$. If $(R,
m_R)$ is as in Corollary \ref{lift-criterion} then $(p-1)!$ is
invertible in $R'$ and $(m_RR')^p=0$, so $m_RR'$ has a not necessarily unique
divided power structure (cf. \cite[Example 3.2.4]{BO}).

In the following we show examples of rings $A_0$ as in Corollary
\ref{lift-criterion} such that $m_0A_0$ does not have a divided
power structure. However, in some examples of Corollary
\ref{DP-existence} one can see that $T^1_{A_0/k}$ is large, i.e.,
$A_0$ has many non-trivial deformations over the ring $k[t]/(t^2)$
of dual numbers. Since $(t)$ in $k[t]/(t^2)$ has a divided power
structure, this show that assumption $pR\ne 0$ in Corollary
\ref{lift-criterion} is essential.

\section{Examples}

In this section $k$ is a field of characteristic $p>0$.

\begin{lemma}\label{checking-lemma}
Let $q=p^r$, where $r\ge 1$.
  Let $f_0=y_1+y_2+y_3\in k[x_1,...,x_n]$ be a sum of non-zero
  monomials, each of total degree $\ge 2$ and such that each variable
  $x_i$ appears in product $y_1 ^{q-p^{r-1}}y_2^{q-p^{r-1}}y_3^{2p^{r-1}-1}$ with degree less than $q$.  Then
\[w_p(f_0^{p^{r-1}})\not \in (x_1^{q},...,x_n^{q}, f_0).\]
\end{lemma}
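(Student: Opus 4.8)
The plan is to prove the equivalent statement that $w_p(f_0^{p^{r-1}})$ has nonzero image in the ring $A_0:=k[x_1,\dots,x_n]/\big((x_1^{q},\dots,x_n^{q})+(f_0)\big)$, and to whittle this down to the non‑vanishing of one monomial. Throughout I set $z_i=y_i^{p^{r-1}}$, so that in characteristic $p$ one has $g:=f_0^{p^{r-1}}=z_1+z_2+z_3$ (the three $z_i$ being distinct monomials with unit coefficients), and $w_p(g)=\sum_{l_1+l_2+l_3=p,\ l_i<p}\frac{1}{l_1!\,l_2!\,l_3!}z_1^{l_1}z_2^{l_2}z_3^{l_3}$. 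The two relations I shall exploit in $A_0$ are: $z_1+z_2+z_3=g=f_0^{p^{r-1}}=0$ (as $f_0=0$ in $A_0$), and $z_i^{p}=y_i^{q}=0$ (as $J_i\neq 0$ forces $x^{qJ_i}\in(x_1^q,\dots,x_n^q)$).

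First I would simplify $\overline{w_p(g)}$ using these relations. Over $\ZZ$ one has $p!\,w_p(g)=(z_1+z_2+z_3)^{p}-z_1^{p}-z_2^{p}-z_3^{p}$; substituting $z_3=-(z_1+z_2)$ makes the first term vanish and collapses the rest to $(z_1+z_2)^p-z_1^p-z_2^p$, so that after dividing by $p!$ and reducing mod $p$ (the coefficients $1/(\alpha!(p-\alpha)!)$ are prime to $p$) I obtain, in $A_0$, the identity $\overline{w_p(g)}=\sum_{\alpha=1}^{p-1}\frac{1}{\alpha!\,(p-\alpha)!}\,z_1^{\alpha}z_2^{p-\alpha}$ (the case $p=2$ is checked by hand). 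All coefficients here are units in $k$. Multiplying by a monomial $z_1^{c}z_2^{d}$ with $c+d=p-2$ and using $z_1^{p}=z_2^{p}=0$ annihilates every summand except $\alpha=p-1-c$, giving $\overline{w_p(g)}\cdot z_1^{c}z_2^{d}=(\text{unit})\cdot z_1^{p-1}z_2^{p-1}$ in $A_0$. Hence it suffices to show that the single monomial $V:=z_1^{p-1}z_2^{p-1}=y_1^{q-p^{r-1}}y_2^{q-p^{r-1}}$ is nonzero in $A_0$. (Note $V$ divides $W:=y_1^{q-p^{r-1}}y_2^{q-p^{r-1}}y_3^{2p^{r-1}-1}$, so the hypothesis makes $V$ a reduced monomial, i.e. nonzero in $C:=k[x_1,\dots,x_n]/(x_1^q,\dots,x_n^q)$.)

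To handle the relation $f_0$ I would pass to the Gorenstein structure of $C$, whose socle is spanned by $x_1^{q-1}\cdots x_n^{q-1}$, with the perfect pairing $\langle a,b\rangle=$ (coefficient of $x_1^{q-1}\cdots x_n^{q-1}$ in $ab$). Since $V\neq 0$ in $A_0=C/f_0C$ means $V\notin f_0C$, and since $V=f_0c$ would force $\langle V,b\rangle=\langle f_0 c,b\rangle=\langle c,f_0 b\rangle=0$ for every $b$ with $f_0b=0$, it is enough to exhibit one $b\in C$ with $f_0b=0$ in $C$ and $\langle V,b\rangle\neq 0$. This is exactly where the hypothesis is used: it asserts that $W=V\cdot y_3^{2p^{r-1}-1}$ is reduced, i.e. its exponent vector $M_W$ satisfies $M_W\le(q-1,\dots,q-1)$ coordinatewise. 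Therefore $E:=(q-1,\dots,q-1)-M_W\ge 0$, and $V\cdot\big(y_3^{\,2p^{r-1}-1}x^{E}\big)=x_1^{q-1}\cdots x_n^{q-1}$ is precisely the socle. So the witness $W$ supplies exactly the ``room'' $y_3^{\,2p^{r-1}-1}x^{E}$ needed to carry $V$ onto the socle; this asymmetric power of $y_3$ is also the reason one eliminates $z_3$ (rather than $z_1$ or $z_2$) in the previous paragraph.

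The hard part is the final construction. The monomial $y_3^{\,2p^{r-1}-1}x^{E}$ is typically not itself an annihilator of $f_0$ (its multiples by $y_1$ and $y_2$ remain reduced), so I would correct it by lower terms to a genuine $b\in\{f_0b=0\}$ without disturbing the socle coefficient of $Vb$, by symmetrising/telescoping against $f_0=y_1+y_2+y_3$. (As an illustrative extreme case, if $p=2$ and $M_W=(1,\dots,1)$ one may simply take $b=f_0$, since then $f_0^{2}=0$ in $C$ and $Vf_0$ equals the socle.) Verifying that such a correction exists and leaves $\langle V,b\rangle$ a nonzero scalar — equivalently, a direct check that $V$ cannot arise from $f_0\cdot(\text{anything})$ without producing uncancellable companion monomials — is the technical heart of the argument, and it is governed entirely by the reducedness of $W$ imposed in the statement.
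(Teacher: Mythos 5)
Your first two paragraphs are correct and constitute a genuinely different reduction from the paper's: the integral identity $p!\,w_p(g)=(z_1+z_2+z_3)^p-z_1^p-z_2^p-z_3^p$, specialized along $z_3=-(z_1+z_2)$ and combined with $z_i^p=0$ in $A_0$, does show that a unit multiple of $V=y_1^{q-p^{r-1}}y_2^{q-p^{r-1}}$ is obtained from $\overline{w_p(f_0^{p^{r-1}})}$ by multiplying by $z_1^cz_2^d$, so the lemma would follow from $V\notin (x_1^q,\dots,x_n^q)+(f_0)$. (The paper instead multiplies by $f_0^{q-1}$, which annihilates the $f_0$-component modulo $(x_1^q,\dots,x_n^q)$ since $f_0^q=y_1^q+y_2^q+y_3^q$, and extracts the coefficient of the monomial $W=y_1^{q-p^{r-1}}y_2^{q-p^{r-1}}y_3^{2p^{r-1}-1}$, which the hypothesis places outside the monomial ideal.)

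The gap is that your argument stops exactly where the content of the lemma begins. The assertion $V\notin(x_1^q,\dots,x_n^q)+(f_0)$ --- equivalently, the existence of $b$ with $f_0b=0$ in $C$ and $\langle V,b\rangle\neq 0$ --- is never established. Your candidate $b_0=y_3^{2p^{r-1}-1}x^E$ is, as you concede, not an annihilator of $f_0$, and the claim that it can be ``corrected by lower terms'' to one without destroying the socle pairing is precisely the nontrivial combinatorial statement; no construction is given and no reason is offered why the correction terms cannot cancel the socle coefficient. (Your reduction is also a priori lossy: $V\neq 0$ in $A_0$ implies the lemma but is formally stronger, so you would additionally need to know it does not fail where the lemma holds.) In the paper the annihilator comes for free --- it is $f_0^{q-1}$, paired against $w_p(f_0^{p^{r-1}})$ itself rather than against the simplified $V$ --- and the entire difficulty is concentrated in showing that the resulting coefficient at $W$ equals $1$, which is the Lucas/Vandermonde computation occupying most of the paper's proof. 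Without that computation, or an explicit witness $b$ for $V$ with a verified nonzero pairing, what you have is a (nice) reduction, not a proof.
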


\begin{proof}
Let us assume that $w_p (f_0^{p^{r-1}})\in (x_1^{q},...,x_n^{q},f_0)$. It follows that $f_0^{q-1}w_p
  (f_0^{p^{r-1}})\in (x_1^{q},...,x_n^{q})$.  Let us note that
\[ (p-1)!f_0^{q-1}w_p (f_0^{p^{r-1}})= \sum_{j_1+j_2+j_3 = q-1}
\binom{q-1}{j_1,\,j_2,\,j_3}
  \sum_{\substack{{l_1+l_2 +l_3= p} \\ l_1,l_2,l_3<p}} \frac{(p-1)!}{l_1!l_2!l_3!}
y_1^{j_1+{p^{r-1}}l_1}y_2^{j_2+{p^{r-1}}l_2}y_3^{j_3+{p^{r-1}}l_3}.
\]
If this polynomial belongs to the ideal $(x_1^{q},...,x_n^{q})$ then the
coefficient $\alpha$ at the monomial $y_1 ^{q-p^{r-1}}y_2^{q-p^{r-1}}y_3^{2p^{r-1}-1}$ is $0$.
This coefficient is equal to
\[ \alpha= \sum_{\substack{{l_1+l_2+l_3 = p} \\ l_1,l_2, l_3 < p}} \frac{(p-1)!}{l_1!l_2!l_3!}
  \sum_{\substack{{j_1+j_2 +j_3= q-1} \\ p^{r-1}l_1+j_1=q-p^{r-1},p^{r-1} l_2+j_2=q-p^{r-1}, p^{r-1} l_3+j_3=2p^{r-1}-1}}
\binom{q-1}{j_1,\,j_2,\,j_3}.
\]
Let us note that $j_1= p^{r-1}(p-1-l_1)$ and $j_2= p^{r-1}(p-1-l_2)$
are $p$-adic expansions.  Since $p^{r-1} l_3+j_3=2p^{r-1}-1$ we have
$l_3\le 1$ and $j_3=(p-1)+...+(p-1)p^{r-2}+(1-l_3)p^{r-1}$ is a
$p$-adic expansion of $j_3$. Note also that
$q-1=(p-1)+...+(p-1)p^{r-1}$. So by Lucas's theorem (see Lemma
\ref{Lucas}) we have
\[
\binom{q-1}{j_1,\,j_2,\,j_3}=\binom{q-1}{j_3}\binom{q-1-j_3}{j_1}\equiv \binom{p-1}{1-l_3}\binom{p-2+l_3}{p-1-l_1} \mod p.
\]
Using Vandermonde's identity (see Lemma \ref{Vandermonde}) we get
\begin{align*}
&  \alpha= \sum_{l_1+l_2= p-1} \frac{(p-1)!}{l_1!l_2!} \binom{p-1}{0} \binom{p-1}{p-1-l_1}+
  \sum_{\substack{{l_1+l_2 = p} \\ l_1,l_2 < p}} \frac{(p-1)!}{l_1!l_2!}
 \binom{p-1}{1}\binom{p-2}{p-1-l_1}\\
     &=  \sum_{l_1=0}^{p-1} \binom{p-1}{l_1}\binom{p-1}{p-1-l_1}+
  \sum_{l_1 =1}^{p-1} \frac{(p-1)!}{l_1!(p-l_1)!} \frac{(p-1)!}{(p-1-l_1)! (l_1-1)!}\\
&={\binom{2p-2}{p-1}}+ \sum_{l_1=0}^{p} \binom{p-1}{l_1}\binom{p-1}{p-l_1}
={\binom{2p-2}{p-1}}+{\binom{2p-2}{p}}=\binom{2p-1}{p}=\binom{(p-1)+p}{p}=1,
 \end{align*}
where in the last line we again use Lucas's theorem. This contradicts our assumption.
\end{proof}

\begin{corollary}\label{DP-existence}
  Let $A_0=k[x_1,...,x_n]/(x_1^{p},...,x_n^{p}, f_0)$, where
  $f_0=y_1+y_2+y_3$ is a sum of non-zero monomials, each of total
  degree $\ge 2$ and such that each variable $x_i$ appears in product
  $y_1(y_2y_3)^{p-1}$ with degree less than $p$.  Then the ideal
  $(x_1,...,x_n)\subset A_0$ does not have a divided power structure.
\end{corollary}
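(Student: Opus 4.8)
The plan is to reduce the statement directly to two results already proved. First observe that the ideal $(x_1,\dots,x_n)\subset A_0$ is exactly $m_0A_0$ in the notation of Proposition \ref{D.P.-criterion}, and that $A_0$ is of the form treated there with $i_1=\dots=i_n=p$ and $I=(f_0)$. Since every monomial of $f_0$ has total degree $\ge 2$, we have $f_0\in m_0^2$ and hence $I=(f_0)\subset m_0^2$, so the proposition applies. Its equivalence of conditions 1 and 3 (taking $T_0=\{f_0\}$ as a one-element generating set of $I$) says that $m_0A_0$ admits a divided power structure if and only if $w_p(f_0)\in (x_1^p,\dots,x_n^p)+(f_0)$. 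Thus it suffices to prove that this membership fails.

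First I would recognize the required non-membership as the case $r=1$ of Lemma \ref{checking-lemma}. Setting $r=1$ gives $q=p$ and $p^{r-1}=1$, so $w_p(f_0^{p^{r-1}})=w_p(f_0)$ and the target ideal $(x_1^q,\dots,x_n^q,f_0)$ becomes precisely $(x_1^p,\dots,x_n^p,f_0)$. The lemma then yields $w_p(f_0)\notin (x_1^p,\dots,x_n^p,f_0)$, and combining this with Proposition \ref{D.P.-criterion} completes the argument.

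The one point requiring care — and essentially the only obstacle — is matching the degree hypothesis of the lemma with that of the corollary. For $r=1$ the lemma's condition asks that each $x_i$ occur in the product $y_1^{p-1}y_2^{p-1}y_3$ with degree $<p$, whereas the corollary assumes this for $y_1(y_2y_3)^{p-1}=y_1\,y_2^{p-1}y_3^{p-1}$. These two monomials differ only by interchanging $y_1$ and $y_3$. Because $f_0=y_1+y_2+y_3$ is symmetric in its three summands, I may apply Lemma \ref{checking-lemma} after relabeling $y_1\leftrightarrow y_3$; under this relabeling the lemma's critical monomial becomes $y_3^{p-1}y_2^{p-1}y_1=y_1(y_2y_3)^{p-1}$ and its hypothesis becomes verbatim the corollary's assumption. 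This symmetry observation bridges the two formulations, after which the corollary follows at once.
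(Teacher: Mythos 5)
Your proposal is correct and follows exactly the paper's own argument: reduce via the equivalence $1\Leftrightarrow 3$ of Proposition \ref{D.P.-criterion} (with $T_0=\{f_0\}$ and $I=(f_0)\subset m_0^2$) to the non-membership $w_p(f_0)\notin(x_1^p,\dots,x_n^p,f_0)$, which is the $r=1$ case of Lemma \ref{checking-lemma}. Your extra observation about relabeling $y_1\leftrightarrow y_3$ to reconcile the lemma's critical monomial $y_1^{p-1}y_2^{p-1}y_3$ with the corollary's $y_1(y_2y_3)^{p-1}$ is a legitimate and welcome clarification of a detail the paper's two-line proof leaves implicit.
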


\begin{proof}
 If $(x_1,...,x_n)\subset A_0$ has a divided power
  structure then Proposition \ref{D.P.-criterion} implies that $w_p
  (f_0)\in (x_1^{p},...,x_n^{p},f_0)$. But this contradicts Lemma \ref{checking-lemma}.
\end{proof}

\begin{example}\label{Koblitz}
As a special case of the above corollary we obtain Koblitz's example
\cite[Example 3.2.4]{BO}: the ideal $(x_1,...,x_6)$ in
$A_0=k[x_1,...,x_6]/(x_1^p,..., x_6^p, x_1x_2+x_3x_4+x_5x_6)$ does not
have a divided power structure.  In this case Corollary
\ref{lift-criterion} implies that $A_0$ does not lift to $W_2(k)$
recovering \cite[Proposition 3.4]{Zd}.  
Below we show that if $k$ has characteristic $2$ then  this ring 
lifts to characteristic zero.
For simplicity we take $k=\FF_2$ although 
the same construction works for an arbitrary field of characteristic $2$.

Let us consider $R=\ZZ [\sqrt{2}]=\ZZ [t]/(t^2-2)$.
This ring has a canonical surjection $R\to R\otimes \FF_2=\FF_2[t]/(t^2)\to \FF_2$.
Let us set 
$$A=R[x_1,...,x_6]/(x_1^2+tx_4x_5x_6, x_2^2+tx_3, x_3^2,x_4^2,x_5^2, x_6^2, x_1x_2+x_3x_4+x_5x_6).$$
Note that we divide by a non-homogeneous ideal, so unlike $A_0$ ring $A$ does not have a canonical grading (but it has a weighted grading, e.g., we can assign $(x_1,...,x_6)$ weights
$(3,1,2,2,2,2)$).
Using any computer algebra system one can check that $A\otimes _{R}\QQ (\sqrt{2})$ is a $\QQ(\sqrt{2})$-algebra of length $36$. One can also check that $A\otimes _{R}\FF_2=A_0$
is an $\FF_2$-algebra of the same length. Therefore after localization we see that $A\left[ \frac{1}{2}\right]$ is a flat $R\left[ \frac{1}{2}\right]$-module lifting $A_0$ to characteristic zero. In fact, with some more work one can probably check that $A$ is a flat $R$-module but we will not need that.

This gives the first known example of a $0$-dimensional scheme defined over a field $k$ of positive characteristic that does not lift to $W_2(k)$ but it lifts to characteristic zero.
\end{example}

\medskip

\begin{corollary}\label{generic-PD-structure}
Let $k$ be an algebraically closed field of characteristic $p>0$.
Let $X\subset \AA ^n_k$ be a general hypersurface singular at $0$.
Let us assume that $n\ge 5$ if $p\ge 3$ or $n\ge 6$ if $p=2$. Then
the first Frobenius neighbourhood of $0\in X$ has no divided power
structure and it does not lift to $W_2(k)$.
\end{corollary}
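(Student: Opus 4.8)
The plan is to read off ``no divided power structure'' from Proposition \ref{D.P.-criterion}, to promote the explicit computation of Lemma \ref{checking-lemma} to a statement about a \emph{general} $f_0$ by a specialization argument, and finally to invoke Corollary \ref{lift-criterion} for non-liftability. Write $X=(f_0=0)$ with $f_0=\sum_J a_Jx^J\in m_0^2$ (this is what it means for $X$ to be singular at $0$), and let $A_0=k[x_1,\ldots,x_n]/((x_1^p,\ldots,x_n^p)+(f_0))$ be the coordinate ring of its first Frobenius neighbourhood. By Proposition \ref{D.P.-criterion} the ideal $m_0A_0$ has a divided power structure if and only if $w_p(f_0)\in(x_1^p,\ldots,x_n^p,f_0)$. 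I would first note that this membership forces $f_0^{p-1}w_p(f_0)\in(x_1^p,\ldots,x_n^p)$: in characteristic $p$ one has $f_0^p=\sum a_J^px^{pJ}\in(x_1^p,\ldots,x_n^p)$ because every monomial of $f_0$ has $|J|\ge2$, so multiplying the membership by $f_0^{p-1}$ lands in $(x_1^p,\ldots,x_n^p)$. In particular every monomial of $f_0^{p-1}w_p(f_0)$ whose exponents are all $<p$ must have coefficient $0$.

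The crux is to turn this into an open condition. Fix a target monomial $x^M$ with all exponents $<p$; its coefficient $P$ in $f_0^{p-1}w_p(f_0)$ is a single polynomial in the coefficients $a_J$ of $f_0$. By the previous paragraph, if $m_0A_0$ carries a divided power structure then $P$ vanishes at the corresponding point of coefficient space; hence it is enough to exhibit \emph{one} $f_0$ at which $P\ne0$, for then $P\not\equiv0$ and a general hypersurface satisfies $P\ne0$. I would take $x^M$ to be precisely the monomial isolated in the proof of Lemma \ref{checking-lemma} (with $r=1$) for the quadrics $f_0=x_1x_2+x_3x_4+x_5x_6$ when $n\ge6$, and $f_0=x_1x_2+x_3x_4+x_5^2$ when $n=5$ and $p\ge3$; so $x^M=x_1^{p-1}x_2^{p-1}x_3^{p-1}x_4^{p-1}x_5x_6$ in the first case. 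There Lemma \ref{checking-lemma} computes this coefficient to be nonzero (it evaluates $(p-1)!\,P$ to $1$). The only thing to verify is that $x^M$ has all exponents $<p$: this is automatic for the Koblitz quadric, whereas in the five-variable case the factor $x_5^2$ contributes exponent $2$, which forces $p\ge3$ --- and the failure of this at $p=2$ is exactly why one must pass to $n\ge6$ there. When $n$ exceeds $6$ (resp.\ $5$) one simply keeps $f_0$ supported on the first six (resp.\ five) variables, or adjoins a high-degree monomial in the remaining variables: since $x^M$ involves none of those variables, they cannot contribute to its coefficient, so $P$ still specialises to a nonzero value. Thus $P\not\equiv0$, and a general hypersurface $X$ has $w_p(f_0)\notin(x_1^p,\ldots,x_n^p,f_0)$, i.e.\ no divided power structure on its first Frobenius neighbourhood.

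For non-liftability to $W_2(k)$ I would apply Corollary \ref{lift-criterion} with $R=W_2(k)$. Here $pR\ne0$, $m_R=(p)$ and $m_R^2=(p^2)=0$, so $e=1\le p-1$. As $X$ is singular at $0$ every monomial of $f_0$ has total degree $\ge2=e+1$, and with $i_1=\cdots=i_n=p$ one has $\lfloor pl_1/p\rfloor+\cdots+\lfloor pl_n/p\rfloor=l_1+\cdots+l_n\ge2$, so the hypotheses of Corollary \ref{lift-criterion} hold. Were $A_0$ liftable to $W_2(k)$, that corollary would produce a divided power structure on $m_0A_0$, contradicting the previous paragraph; hence $A_0$ does not lift to $W_2(k)$.

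The main obstacle is precisely the step in the second paragraph. Lemma \ref{checking-lemma} is only a computation for three carefully chosen monomials, and the real work is to recognise that it certifies the \emph{generic} behaviour rather than one isolated example. The device that makes this go through is that the obstructing coefficient is one polynomial in the $a_J$ which the lemma evaluates to a unit at the Koblitz point, so its non-vanishing spreads to a dense open locus; the only genuinely delicate input is the choice of $x^M$ keeping all exponents $<p$, which is the source of the dichotomy between $p=2$ and $p\ge3$.
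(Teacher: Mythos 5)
Your proof is correct and takes essentially the same route as the paper: the paper likewise observes that $(p-1)!\,f^{p-1}w_p(f)$ is polynomial in the coefficients of $f$, so that the condition $M(f)\in(x_1^p,\ldots,x_n^p)$ is closed in the parameter space and fails at the explicit examples coming from Lemma \ref{checking-lemma} ($x_1^2+x_2x_3+x_4x_5$ for $p\ge 3$, the Koblitz quadric for $p=2$), then concludes via Proposition \ref{D.P.-criterion} and Corollary \ref{lift-criterion} exactly as you do. Your version of isolating the single coefficient of the target monomial $x^M$ is just a slightly more explicit phrasing of the same genericity argument.
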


\begin{proof}
Let us choose coordinates $x_1,...,x_n$ in $\AA^n_k$ and let $m_0$
be the maximal ideal $(x_1,...,x_n)\subset k[\AA^n_k]=k[x_1,...,x_n]$.
Let $f=0$ be an equation of a hypersurface singular at $0$. 
Note
that $M(f):=(p-1)!f^{p-1}w_p(f)$ is an integer polynomial in
coefficients of $f$ and hence the condition $M(f)\in
(x_1^p,...,x_n^p)$ defines a closed subset in the space
$m_0^2/(x_1^p,...,x_n^p)$ parameterizing the first Frobenius
neighbourhoods of hypersurfaces singular at $0$. Clearly, this subset 
does not correspond to all hypersurfaces and it is non-obvious that it is 
non-empty.  But the proof of
Lemma \ref{checking-lemma} shows that there exists $f$ for which
$M(f)\not \in (x_1^p,...,x_n^p)$. More precisely, one can take
$f=x_1^2+x_2x_3+x_4x_5$ if $p\ge 3$ or $f=x_1x_2+x_3x_4+x_5x_6$ if
$p=2$. So a general hypersurface also satisfies this condition and
by Proposition \ref{D.P.-criterion} its first Frobenius
neighbourhood has no divided power structure. By Corollary
\ref{lift-criterion} such schemes do not lift to
$W_2(k)$.
\end{proof}

\begin{remark}
In the above corollary the notion of ``general''  is used in 
the usual sense, i.e., it corresponds to a general point
in the parameter space of all hypersurfaces singular at $0$.
However, the proof shows that the assertion holds also for a cone over 
a general projective hypersurface of degree $2$ in $\PP ^{n-1}$ (under 
the same assumptions on $n$). One can also obtain a similar statement 
for hypersurfaces of higher degree at the cost of increasing the number 
of variables and degree of hypersurfaces (see the proof of Corollary 
\ref{general-lifting}).
\end{remark}

\medskip

The next proposition gives for any local ring $R$ an example of   a $0$-dimensional
scheme $Z\subset \AA ^{n}_k$ given by only quadratic equations but
non-liftable to $R$.

\begin{proposition}
  Let $(R, m_R)$ be a local ring with $pR\ne 0$ and residue
  field $k=R/m_R$ of characteristic $p$. Let us assume that
  $m_R^{p-1}=0$.  Let us take $n=6(p-1)$ and consider the ring
$C=k[y_{1,1},...,y_{6,p-1}]/(y_{1,1}^{2},...,y_{6,p-1}^{2}, g)$,
where
$$g=\sum_{i=1}^{p-1}\sum_{j=1}^{p-1} (y_{1,i}y_{2,j}+y_{3,i}y_{4,j}+y_{5,i}y_{6,j}).$$
Then $Z:=\Spec C$ is not liftable to $R\to k$.
\end{proposition}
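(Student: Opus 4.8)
The plan is to show that $Z$ violates the necessary condition for liftability furnished by Corollary \ref{lift-criterion}: liftability would force the maximal ideal of $C$ to admit a divided power structure, and I will rule this out. To set up the criterion, take $n=6(p-1)$, let all exponents be $i_{l,j}=2$, and put $I=(g)$ with generating set $T_0=\{g\}$. Since $m_R^{p-1}=0$ we may take $e=p-2$ (note $p\ge 3$, because $m_R^{p-1}=0$ together with $pR\ne 0$ is impossible for $p=2$), so that $e+1=p-1$. Every monomial of $g$ has the shape $y_{a,i}y_{b,j}$, with exactly two exponents equal to $1$, so the floor condition reads $\lfloor p/2\rfloor+\lfloor p/2\rfloor=p-1\ge e+1$, which holds with equality. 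Hence Corollary \ref{lift-criterion} (equivalently Theorem \ref{lift-criterion2} with $q=p$, after the standard reduction to $pm_R=0$) applies, and it suffices to prove that the maximal ideal of $C$ carries no divided power structure. By Proposition \ref{D.P.-criterion} this amounts to showing
\[
w_p(g)\notin (y_{1,1}^2,\ldots,y_{6,p-1}^2,\,g).
\]

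To identify $g$ as coming from Koblitz's polynomial, I would invoke Proposition \ref{DP-hom-example} with six variables $x_1,\ldots,x_6$ and all $i_l=p$. Writing $A=k[x_1,\ldots,x_6]/(x_1^p,\ldots,x_6^p)$ and $B=k[y_{1,1},\ldots,y_{6,p-1}]/(y_{l,j}^2)$ with their canonical divided power structures $\gamma$ (with $\gamma_p(x_l)=0$) and $\delta$ (with $\delta_p(y_{l,j})=0$), the map $\varphi\colon x_l\mapsto \sum_{j=1}^{p-1}y_{l,j}$ is an injective homomorphism of divided power rings, and a direct expansion gives $\varphi(f_0)=g$ for $f_0=x_1x_2+x_3x_4+x_5x_6$. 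The crucial bridge is the identity $\varphi(w_p(f_0))=w_p(g)$ in $B$: since each monomial of $f_0$ (resp.\ of $g$) is a product of two distinct variables, the computation in the proof of Proposition \ref{D.P.-criterion} gives $\gamma_p(f_0)=w_p(f_0)$ in $A$ and $\delta_p(g)=w_p(g)$ in $B$, and the divided power compatibility $\varphi(\gamma_p(f_0))=\delta_p(\varphi(f_0))$ then yields the claim. In particular $w_p(g)$ lies in the subring $\varphi(A)$.

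Finally I would transfer the non-membership already known for Koblitz's ring. By Corollary \ref{DP-existence} (precisely the case $f_0=x_1x_2+x_3x_4+x_5x_6$) we have $w_p(f_0)\notin (f_0)A$. Under the identification from the proof of Proposition \ref{DP-hom-example}, $B\cong C_{p-1}^{\otimes 6}$ carries an action of $G=(S_{p-1})^6$ with invariant subring $B^G=\varphi(A)$. Since $(p-1)!\equiv -1\pmod p$ is a unit in $k$, the order $|G|=((p-1)!)^6$ is invertible, so the Reynolds operator $\rho=\tfrac{1}{|G|}\sum_{\sigma\in G}\sigma$ is a $B^G$-linear retraction $B\to B^G=\varphi(A)$. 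Suppose for contradiction that $w_p(g)=gh$ for some $h\in B$. Applying $\rho$ and using that $g=\varphi(f_0)\in B^G$ is invariant gives $w_p(g)=\rho(w_p(g))=\rho(gh)=g\,\rho(h)$ with $\rho(h)\in\varphi(A)$; pulling this back through the injective $\varphi$ produces $w_p(f_0)=f_0\cdot\varphi^{-1}(\rho(h))\in (f_0)A$, contradicting Corollary \ref{DP-existence}. Hence $w_p(g)\notin (g)B$, the maximal ideal of $C$ admits no divided power structure, and $Z=\Spec C$ is not liftable to $R\to k$.

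The main obstacle is this last transfer: non-membership in $(f_0)A$ does not descend automatically to non-membership in $(g)B$, and the argument rests on recognizing $\varphi(A)$ as the invariant subring $B^G$ and exploiting the resulting $B^G$-linear splitting of $\varphi$ (equivalently, the injectivity of the induced map $A/(f_0)\to B/(g)$). The divided power identity $\varphi(w_p(f_0))=w_p(g)$, though elementary, is also essential, since $w_p$ is not a priori compatible with ring homomorphisms and only becomes so here through the divided power structure.
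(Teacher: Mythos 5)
Your proof is correct and follows essentially the same route as the paper: pull $g$ back to Koblitz's polynomial $f_0=x_1x_2+x_3x_4+x_5x_6$ via the divided power homomorphism of Proposition \ref{DP-hom-example}, use $\varphi(w_p(f_0))=w_p(g)$ and $w_p(f_0)\notin f_0A$, transfer non-membership through the group action, and conclude by Proposition \ref{D.P.-criterion} and Corollary \ref{lift-criterion}. Your Reynolds-operator retraction onto $B^G=\varphi(A)$ is exactly the right way to make precise the step the paper dismisses with ``using the group action one can easily see,'' and your verification of the floor condition and the remark that $p\ge 3$ is forced are details the paper leaves implicit.
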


\begin{proof}
By Proposition \ref{DP-hom-example} we have an injective
homomorphism of divided power rings
\[ \varphi: (A=k[x_1,...,x_6]/(x_1^p,..., x_6^p),
I_A, \gamma)\to
(B=k[y_{1,1},...,y_{6,p-1}]/(y_{1,1}^{2},...,y_{6,p-1}^{2}), I_B,
\delta)
\] given by $x_l\to \sum _{j=1}^{p-1}y_{l,j}$. Let us set
$f=x_1x_2+x_3x_4+x_5x_6$. Then $\varphi (f)=g$ and hence $\varphi
(w_p(f))=w_p(g)$. By Lemma \ref{DP-existence} and Proposition
\ref{D.P.-criterion} we have $w_p(f)\not \in fA$. Using the group
action as in proof of Proposition \ref{DP-hom-example} one can
easily see that this implies that $w_p(g)\not \in gB$ and hence
the ideal $(y_{i,j})_{i=1,...,6, j=1,...,p-1}$ in $C=B/gB$ does
not have a divided power structure. Hence the required assertion
follows from Corollary \ref{lift-criterion}.
\end{proof}

\medskip

\begin{proposition}\label{non-liftable}
  Let $(R, m_R)$ be a local ring with $pR\ne 0$ and residue
  field $k=R/m_R$ of characteristic $p$. Let us assume that
  $m_R^{q}=0$ for some $q=p^r$ with $r\ge 1$. 
Let us take $n=3q$ and consider the ring $A_0=k[x_1,...,x_n]/(x_1^{q},...,x_n^{q}, f_0)$, where
$$f_0=x_1x_2...x_q+x_{q+1}x_{q+2}... x_{2q}+x_{2q+1}x_{2q+2}...x_{3q}.$$  
Then $A_0$ is not liftable to $R\to k$.
\end{proposition}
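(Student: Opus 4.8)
The plan is to apply Theorem \ref{lift-criterion2} with $i_1=\dots=i_n=q$, with the principal ideal $I=(f_0)$, and with $e=q-1$, and then to contradict its conclusion by means of Lemma \ref{checking-lemma}. The structure is therefore the same as in Corollary \ref{lift-criterion}, except that here $r$ may be larger than $1$, so I work with the full strength of Theorem \ref{lift-criterion2} rather than with divided powers directly.

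First I would reduce to the case $pm_R=0$, exactly as in the proof of Corollary \ref{lift-criterion}. Passing from $R$ to $R'=R/pm_R$, a flat lifting of $A_0$ over $R$ base-changes to a flat lifting over $R'$ (flatness is preserved under the quotient map $R\to R'$), so it suffices to rule out liftability to $R'\to k$. The ideal $pR$ is cyclic, hence finitely generated, and $pm_R=m_R\cdot(pR)$; so if $pR=pm_R$ then Nakayama's lemma forces $pR=0$, contradicting $pR\ne 0$. Therefore $pR'=pR/pm_R\ne 0$. Moreover $pm_{R'}=0$ by construction, and $m_{R'}^{q}=(m_R^{q}+pm_R)/pm_R=0$ since $m_R^{q}=0$. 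Thus $R'$ satisfies the hypotheses of Theorem \ref{lift-criterion2} with $e=q-1$ (note $1\le e\le q-1$ since $q=p^r\ge 2$).

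Next I would verify the combinatorial hypothesis. Each of the three monomials of $f_0$ is a product of $q$ distinct variables, each to the first power, so its multiindex $(l_1,\dots,l_n)$ has $q$ entries equal to $1$ and the rest $0$; with $i_j=q$ this yields $\sum_j\lfloor ql_j/i_j\rfloor=q=e+1$, so the required inequality holds (in fact with equality). Since $A_0$ is literally $k[x_1,\dots,x_n]/((x_1^{q},\dots,x_n^{q})+I)$ for $I=(f_0)$ and $f_0\in I$, liftability of $A_0$ to $R'\to k$ would give, by Theorem \ref{lift-criterion2}, the membership $w_p(f_0^{p^{r-1}})\in(x_1^{q},\dots,x_n^{q},f_0)$.

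Finally I would contradict this using Lemma \ref{checking-lemma} with $y_1=x_1\cdots x_q$, $y_2=x_{q+1}\cdots x_{2q}$, $y_3=x_{2q+1}\cdots x_{3q}$. These are non-zero monomials of total degree $q\ge 2$, and because the three blocks of variables are disjoint, a variable occurring in $y_1$ or $y_2$ appears in $y_1^{q-p^{r-1}}y_2^{q-p^{r-1}}y_3^{2p^{r-1}-1}$ with degree $q-p^{r-1}<q$, while a variable in $y_3$ appears with degree $2p^{r-1}-1<p^r=q$ (using $p\ge 2$). Hence Lemma \ref{checking-lemma} gives $w_p(f_0^{p^{r-1}})\notin(x_1^{q},\dots,x_n^{q},f_0)$, contradicting the previous step; therefore $A_0$ is not liftable to $R\to k$. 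The only genuinely delicate point is the reduction to $pm_R=0$ while preserving $pR\ne 0$, and that is handled by the cyclicity of $pR$ together with Nakayama; once it is in place, everything reduces to a direct check that the two cited results apply with the choices $i_j=q$, $I=(f_0)$ and $e=q-1$.
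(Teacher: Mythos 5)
Your proof is correct and follows essentially the same route as the paper: reduce to $pm_R=0$ as in Corollary \ref{lift-criterion}, apply Theorem \ref{lift-criterion2} with $i_j=q$, $I=(f_0)$ and $e=q-1$, and contradict its conclusion via Lemma \ref{checking-lemma}. You merely spell out the verifications (the Nakayama step and the degree bounds $q-p^{r-1}<q$ and $2p^{r-1}-1<q$) that the paper leaves implicit.
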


\begin{proof}
As in proof of Corollary \ref{lift-criterion} we can assume that $pm_R=0$.
Then by Theorem \ref{lift-criterion2} it is sufficient to show that
  $w_p(f_0^{p^{r-1}})\not \in (x_1^{q},...,x_n^{q}, f_0)$. This is a
  direct corollary of Lemma \ref{checking-lemma}.
\end{proof}

\begin{corollary} \label{general-lifting}
Let $k$ be an algebraically closed field of characteristic $p>0$
and let $(R, m_R)$ be a local ring with $pR\ne 0$ and residue
field $k$. Let us assume that  $m_R^{q}=0$, where $q=p^r$.  Let
$X\subset \AA ^n_k$ be a general hypersurface with multiplicity
$\ge q$ at $0$. If $n\ge 3q$ then the $r$-th Frobenius
neighbourhood of $0\in X$ is not liftable to $R\to k$.
\end{corollary}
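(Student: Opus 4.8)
The plan is to combine the numerical liftability criterion of Theorem \ref{lift-criterion2} with a genericity argument, using the explicit polynomial of Proposition \ref{non-liftable} as the witness that makes the relevant open locus non-empty. First I would reduce to the case $pm_R=0$ exactly as in the proof of Corollary \ref{lift-criterion}: replacing $R$ by $R/pm_R$ preserves $pR\ne 0$ by Nakayama's lemma and does not enlarge the class of liftable schemes. Since $m_R^q=0$ and $q=p^r\ge 2$, we may then take $e=q-1$, so that $1\le e\le q-1$ and $m_R^{e+1}=0$, and Theorem \ref{lift-criterion2} becomes applicable.

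Next I would set up the parameter space. Writing $f_0$ for an equation of $X$, the hypothesis that $X$ has multiplicity $\ge q$ at $0$ means $f_0\in m_0^q$, and the $r$-th Frobenius neighbourhood of $0\in X$ is $A_0=k[x_1,\dots,x_n]/(x_1^q,\dots,x_n^q,f_0)$, whose isomorphism class depends only on the reduction $\bar f_0$ of $f_0$ modulo $(x_1^q,\dots,x_n^q)$. Let $V$ be the finite-dimensional $k$-vector space spanned by the monomials of total degree $\ge q$ in which every variable occurs with exponent $<q$; then $\bar f_0\in V$ and the liftability of $A_0$ depends only on the point $\bar f_0$ of $V$. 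Because $i_1=\dots=i_n=q$, for every monomial $x^{(l_1,\dots,l_n)}$ of $f_0$ one has $\sum_j\lfloor ql_j/q\rfloor=\sum_j l_j\ge q=e+1$, so the numerical hypothesis of Theorem \ref{lift-criterion2} holds for all $f_0\in m_0^q$. Hence if $A_0$ lifts to $R\to k$ then $w_p(f_0^{p^{r-1}})\in(x_1^q,\dots,x_n^q,f_0)$.

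The key step is to turn this membership into a Zariski-closed condition on $V$, just as in the proof of Corollary \ref{generic-PD-structure}. Multiplying by $f_0^{q-1}$ and using the characteristic-$p$ identity $f_0^{q}=\bigl(\sum_i a_ix^{J_i}\bigr)^{q}=\sum_i a_i^{q}x^{qJ_i}\in(x_1^q,\dots,x_n^q)$, I would deduce that liftability of $A_0$ forces
\[
(p-1)!\,f_0^{q-1}w_p(f_0^{p^{r-1}})\in(x_1^q,\dots,x_n^q).
\]
Every coefficient of this integral polynomial at a monomial with all exponents $<q$ is a polynomial function of the coefficients of $f_0$, so the displayed membership cuts out a closed subset $Z\subset V$ containing the locus of liftable $A_0$. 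It remains to show $Z\ne V$. Here I invoke the witness $f_0^{\mathrm{wit}}=x_1\cdots x_q+x_{q+1}\cdots x_{2q}+x_{2q+1}\cdots x_{3q}$, which lies in $V$ since $n\ge 3q$; the unused variables $x_{3q+1},\dots,x_n$ cause no trouble, since they occur with exponent $0<q$. The computation in the proof of Lemma \ref{checking-lemma} produces the coefficient $\alpha=1$ at the monomial $y_1^{\,q-p^{r-1}}y_2^{\,q-p^{r-1}}y_3^{\,2p^{r-1}-1}$, whose exponents are all $<q$, so $f_0^{\mathrm{wit}}$ violates the displayed membership and $f_0^{\mathrm{wit}}\notin Z$.

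Consequently $Z$ is a proper closed subset of the irreducible space $V$, and its complement is a dense open set containing $\bar f_0^{\mathrm{wit}}$. For a general $f_0$ (of multiplicity $\ge q$ at $0$) the point $\bar f_0$ lies in this open set, so the displayed membership fails, hence $w_p(f_0^{p^{r-1}})\notin(x_1^q,\dots,x_n^q,f_0)$, and the contrapositive of Theorem \ref{lift-criterion2} shows that the $r$-th Frobenius neighbourhood of $0\in X$ is not liftable to $R\to k$. I expect the main obstacle to be the closedness step: one must check that clearing $f_0$ from the ideal membership by multiplication by $f_0^{q-1}$ genuinely yields a condition that is simultaneously closed in the coefficients of $f_0$ and still violated by the witness, and that passing to the reduction modulo $(x_1^q,\dots,x_n^q)$ disturbs neither the numerical hypothesis of Theorem \ref{lift-criterion2} nor the identification of the liftability of $A_0$ with a condition on the single point $\bar f_0\in V$.
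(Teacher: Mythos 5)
Your proposal is correct and follows essentially the same route as the paper: reduce to $pm_R=0$, apply Theorem \ref{lift-criterion2} with $e=q-1$, observe that $(p-1)!\,f_0^{q-1}w_p(f_0^{p^{r-1}})\in(x_1^q,\dots,x_n^q)$ is a closed condition on the coefficients of $f_0$ modulo $(x_1^q,\dots,x_n^q)$, and use the polynomial of Proposition \ref{non-liftable} (via Lemma \ref{checking-lemma}) as the witness making the complement non-empty. This is precisely the argument the paper indicates by declaring the proof ``analogous to the proof of Corollary \ref{generic-PD-structure}''; you have merely spelled out the details it leaves implicit.
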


\begin{proof}
The proof is analogous to the proof of Corollary
\ref{generic-PD-structure}, with Proposition \ref{non-liftable}
giving an example of a polynomial $f$ for which $f^{q-1}w_p
(f^{p^{r-1}})\not \in (x_1^{q},...,x_n^{q})$.
\end{proof}

\begin{corollary} \label{direct-system} Let $k$ be a field of
  characteristic $p>0$.  There exists a direct system $\{ X_n\} _{n\in
    \NN}$ of $0$-dimensional $k$-schemes such that for any noetherian local ring
  $(R, m_R)$ with $pR\ne 0$ and residue field $k=R/m_R$ the schemes $X_n$
do not lift to $R\to k$ for all sufficiently large $n$.
\end{corollary}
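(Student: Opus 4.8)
The plan is to let $X_n$ be precisely the scheme furnished by Proposition \ref{non-liftable} at level $q=p^n$, and to deduce the statement for an arbitrary noetherian local $R$ by cutting $R$ down to a suitable Artinian quotient to which that proposition applies. Concretely, for each $n\ge 1$ I would set $q_n=p^n$ and
\[
X_n=\Spec k[x_1,\dots,x_{3q_n}]/(x_1^{q_n},\dots,x_{3q_n}^{q_n},f_0^{(n)}),
\]
where
\[
f_0^{(n)}=x_1\cdots x_{q_n}+x_{q_n+1}\cdots x_{2q_n}+x_{2q_n+1}\cdots x_{3q_n}.
\]
To view $\{X_n\}$ as a direct system I would equip it with any compatible family of transition morphisms $\phi_{mn}\colon X_m\to X_n$ for $m\le n$; since the assertion to be proved concerns each $X_n$ and the index $n$ alone, the choice is immaterial, and one may simply take the morphisms $X_m\to\Spec k\hookrightarrow X_n$ factoring through the origin (these compose correctly because the origin is a section of the structure morphism).

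Now fix $(R,m_R)$ noetherian local with residue field $k$ and $pR\ne 0$. The first step is to replace $R$ by an Artinian quotient on which Proposition \ref{non-liftable} can be used. Since $pR\ne 0$ we have $p\cdot 1_R\ne 0$, while Krull's intersection theorem gives $\bigcap_{N\ge 1}m_R^N=0$; hence there is an $N$ with $p\notin m_R^N$. I would then put $\bar R=R/m_R^N$, a noetherian local ring with residue field $k$ satisfying $m_{\bar R}^N=0$ and $p\bar R\ne 0$. The key observation is that liftability descends to this quotient: if $X_n$ admits a flat lifting $\tilde X$ over $R\to k$, then $\tilde X\times_R\bar R$ is flat over $\bar R$ and has special fibre $\tilde X\times_R k\simeq X_n$, so $X_n$ is liftable to $\bar R\to k$ as well.

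It remains to choose $n$ large. Fix $n_0$ with $p^{n_0}\ge N$. For every $n\ge n_0$ we have $q_n=p^n\ge N$, whence $m_{\bar R}^{q_n}\subseteq m_{\bar R}^N=0$; together with $p\bar R\ne 0$ this places $\bar R$ under the hypotheses of Proposition \ref{non-liftable} (taking $r=n$), so $X_n$ is not liftable to $\bar R\to k$. By the descent observation of the previous paragraph $X_n$ is therefore not liftable to $R\to k$. Since $N$, and hence the threshold $n_0$, depends only on $R$, this is exactly the claim that $X_n$ fails to lift to $R\to k$ for all sufficiently large $n$.

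The only delicate point beyond invoking Proposition \ref{non-liftable} is the passage from a general noetherian local $R$—which need not be Artinian, so that $m_R^q=0$ may well fail—to the Artinian quotient $\bar R$. The crux is to choose the truncation level $N$ so that $p$ survives in $\bar R$, i.e.\ $p\bar R\ne 0$; this is precisely what Krull's intersection theorem secures. Once $p$ is nonzero in $\bar R$, flat base change transports the non-liftability produced by Proposition \ref{non-liftable} back up to $R$, and the fact that $q_n=p^n\to\infty$ guarantees that any fixed ring $R$ is eventually handled.
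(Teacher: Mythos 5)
Your proof is correct, and it reaches the conclusion by a genuinely different (and in one respect more careful) route than the paper. The paper does not use the schemes of Proposition \ref{non-liftable} as they stand: it builds the system inductively, passing from $X_n=\Spec k[x_1,\dots,x_{m_n}]/(x_1^{i_1},\dots,x_{m_n}^{i_{m_n}},f)$ to $X_{n+1}$ by adjoining $3p^n$ new variables $y_j$ with relations $y_j^{p^n}=0$ and replacing $f$ by $f(x)+g(y)$, where $g$ is the three-term product polynomial. Setting the new variables to zero gives surjections $A_{n+1}\to A_n$, hence closed immersions $X_n\hookrightarrow X_{n+1}$, so the schemes are genuinely nested; non-liftability is then read off from Theorem \ref{lift-criterion2} via the projection of $A_{n+1}$ onto the ring in the $y$-variables. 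Your transition maps factoring through the origin do form a direct system in the literal sense, so the statement as written is proved, but they render the phrase ``direct system'' contentless: the point of the corollary is the chain of closed immersions, and there is no evident closed immersion between the Proposition \ref{non-liftable} schemes for different $q$, which is precisely why the paper resorts to the concatenation $f(x)+g(y)$. If you want the stronger (intended) conclusion, you should adopt that inductive construction. On the other hand, your reduction to the Artinian case is cleaner than the paper's: you choose $N$ with $p\notin m_R^N$ (possible by Krull's intersection theorem since $p\ne 0$ in $R$) and pass to $\bar R=R/m_R^N$, which visibly satisfies $p\bar R\ne 0$ and $m_{\bar R}^N=0$, whereas the paper asks for $e$ with $m_R^{e+1}\subset pR$, which need not exist for non-Artinian $R$ (e.g.\ $R=W(k)[[t]]$); your formulation is the one that works verbatim for an arbitrary noetherian local ring. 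The application of Proposition \ref{non-liftable} to $\bar R$ with $q=p^n\ge N$ and the base-change argument descending liftability from $R$ to $\bar R$ are both correct.
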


\begin{proof}
  We construct the required system inductively starting with
  $X_0=\Spec k$.  Suppose that we constructed $X_n$ and it is of the
  form $\Spec k[x_1,...,x_{m_n}]/(x_1^{i_1},...,x_{m_n}^{i_{m_n}},
  f)$.  Let us set
\[ A_{n+1}: = k[x_1,...,x_{m_n}, y_{1},...,
y_{3p^n}]/(x_1^{i_1},...,x_{m_n}^{i_{m_n}}, y_{1}^{p^n},...,
y_{3p^n}^{p^n}, f(x)+ g(y)),
\]
where  $g(y)= y_1...y_{p^n}+y_{p^n+1}... y_{2p^n}+y_{2p^n+1}...y_{3p^n})$.
 By construction we have a surjective homomorphism
$$\varphi_{n+1}: A_{n+1}\to B_{n+1}:=k[y_{1},...,y_{3p^n}]/(y_{1}^{p^n},..., y_{3p^n}^{p^n}, g(y)), $$
which maps $w_p((f(x)+g(y))^{p^{n-1}})$ to
$w_p(g(y))^{p^{n-1}})$. Since $w_p(g(y))^{p^{n-1}})\ne 0$ in
$B_{n+1}$, we also have $w_p((f(x)+g(y))^{p^{n-1}})\ne 0$ in
$A_{n+1}$. So by Theorem \ref{lift-criterion2}  if we  set $X_{n+1}:=\Spec A_{n+1}$ then
$X_{n+1}$ does not lift to local rings $(R, m_R)$ with $m_R ^{p^n}=0$ and $pR\ne 0$.

Now if $(R, m_R)$ is any noetherian local ring with $pR\ne 0$ and
residue field $k=R/m_R$ then by Krull's intersection theorem we can
find some $e$ such that $m_R^{e+1}\subset pR$. If $e\le p^n$ then
$X_{n+1}$ does not lift to $R/m_R^{e}$ and hence it also does not lift
to $R\to k$.
\end{proof}

\section{Liftability of $0$-dimensional Gorenstein schemes }

Let $X\subset Z$ be a subscheme. Define a sheaf of ideals $I_Y =
\cHom_{\cO_Z} (\cO_X , \cO_Z )$, and let $Y\subset Z$ be the subscheme
defined by $I_Y$. Then we say that \emph{$Y$ is linked to $X$ by $Z$}.

\medskip

Let us set $S=k[x_1,\ldots,x_{n}]$ and $B_0= S/
{(x_1^p,\ldots,x_{n}^p)}$. Let us fix some $f\in S$ and set
$A_0=B_0/fB_0$.
Let us consider a local Artin ring $R$ with  residue field $k$.
The second part of the next proposition is a special case of a variant of \cite[Exercise 9.4]{Ha}.

\begin{proposition}\label{non-liftable-Gorenstein}
  The scheme $Y_0=\Spec C_0$, where $C_0=B_0/(0:_{B_0}f)$, is a $0$-dimensional
  Gorenstein $k$-scheme.  If $Y_0$ has a lifting to $R\to k$ then $X_0=\Spec A_0$ also
  has a lifting to $R\to k$.
\end{proposition}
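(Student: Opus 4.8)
The statement is the Gorenstein \emph{linkage} (liaison) of $X_0$ and $Y_0$ inside the Gorenstein Artinian complete intersection $Z_0=\Spec B_0$: since $\Hom_{B_0}(B_0/fB_0,B_0)=(0:_{B_0}f)$, the ideal cutting out $Y_0$ in $Z_0$ is $I_{Y_0}=(0:_{B_0}f)$ and $C_0=B_0/I_{Y_0}$ is the link of $A_0=B_0/fB_0$. I would prove the two assertions separately, the first by a socle computation and the second by relative Gorenstein duality. Throughout I assume $f\neq 0$ in $B_0$, so that $C_0\neq 0$.

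For the Gorenstein claim, recall that $B_0=k[x_1,\dots,x_n]/(x_1^p,\dots,x_n^p)$ is a finite-dimensional local complete intersection, hence Gorenstein Artinian, with one-dimensional socle $\operatorname{soc}(B_0)=(0:_{B_0}m_{B_0})=k\cdot x_1^{p-1}\cdots x_n^{p-1}$. Then $C_0$ is a finite-dimensional local $k$-algebra, so $Y_0$ is $0$-dimensional, and $C_0$ is Gorenstein if and only if $\operatorname{soc}(C_0)$ is $1$-dimensional. The plan is to compute $\operatorname{soc}(C_0)=\big((0:_{B_0}f):_{B_0}m_{B_0}\big)/(0:_{B_0}f)$ and to observe that multiplication by $f$ carries it isomorphically onto $fB_0\cap\operatorname{soc}(B_0)$ (this map is well defined, its kernel is exactly $(0:_{B_0}f)$, and its image is $fB_0\cap\operatorname{soc}(B_0)$). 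Since $fB_0$ is a nonzero ideal of the Artinian local ring $B_0$, it meets the socle nontrivially; as $\operatorname{soc}(B_0)$ is $1$-dimensional this forces $fB_0\cap\operatorname{soc}(B_0)=\operatorname{soc}(B_0)$. Thus $\operatorname{soc}(C_0)$ is $1$-dimensional and $C_0$ is Gorenstein.

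For the lifting claim the idea is to lift the entire liaison picture and to exchange the two linked ideals by duality. Since by hypothesis $C_0$ admits a flat lift to $R$, applying Lemma \ref{Hartshorne} successively along the small extensions $R/m_R^{i+1}\to R/m_R^{i}$ and keeping $x_1^p,\dots,x_n^p$ among the chosen relations, I obtain a flat lift $\tilde C=R[x]/(\tilde g_1,\dots,\tilde g_n,\tilde h_1,\dots,\tilde h_t)$ of $C_0$ with each $\tilde g_i$ reducing to $x_i^p$ and $h_1,\dots,h_t$ lifting generators of $I_{Y_0}$. By Corollary \ref{regular-reduction} the intermediate ring $\tilde B=R[x]/(\tilde g_1,\dots,\tilde g_n)$ is a flat lift of $B_0$, and $\tilde C=\tilde B/\tilde J$ with $\tilde J=(\tilde h_1,\dots,\tilde h_t)\tilde B$. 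Being a finite free relative complete intersection over $R$, the algebra $\tilde B$ is relatively Gorenstein: its relative dualizing module $\Hom_R(\tilde B,R)$ is free of rank one over $\tilde B$, so fixing a generator identifies $\Hom_{\tilde B}(-,\tilde B)$ with $(-)^\vee:=\Hom_R(-,R)$, a duality that is exact on $R$-flat modules. Now from $0\to\tilde J\to\tilde B\to\tilde C\to 0$ and flatness of $\tilde B,\tilde C$ one gets that $\tilde J$ is $R$-flat with $\tilde J\otimes_R k=I_{Y_0}$; dualizing this sequence of $R$-flat modules and using $\Hom_{\tilde B}(\tilde C,\tilde B)=(0:_{\tilde B}\tilde J)$ yields $0\to(0:_{\tilde B}\tilde J)\to\tilde B\to\tilde J^\vee\to 0$. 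Hence $\tilde A:=\tilde B/(0:_{\tilde B}\tilde J)\cong\tilde J^\vee=\Hom_R(\tilde J,R)$ is $R$-free, so it is a flat $R$-algebra. To identify its special fibre, I tensor the last sequence with $k$ (it stays exact, as $\tilde J^\vee$ is $R$-flat): the reduction $(0:_{\tilde B}\tilde J)\otimes_R k\hookrightarrow B_0$ is injective with image $\overline K$ of dimension $\dim_k\Hom_k(C_0,k)=\dim_k fB_0$; since every element of $(0:_{\tilde B}\tilde J)$ reduces into $(0:_{B_0}I_{Y_0})=fB_0$, we have $\overline K\subseteq fB_0$, whence $\overline K=fB_0$ by the dimension count. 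Therefore $\tilde A\otimes_R k=B_0/fB_0=A_0$ as $k$-algebras, so $\tilde A$ is a flat lift of $A_0$ and $X_0$ lifts to $R\to k$.

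The main obstacle is the middle step: arranging the flat lift of $C_0$ to be a quotient of a self-dual relative complete intersection $\tilde B$ lifting $B_0$, so that Matlis-type duality over $\tilde B$ coincides with $R$-linear duality and remains exact on flat modules, and then verifying that the duality returns not merely an $R$-flat module abstractly isomorphic to $A_0$ but the correct ring $B_0/fB_0$ on the special fibre. The containment-plus-dimension argument for $\overline K=fB_0$ is precisely what pins down the ring structure; once the self-duality $\Hom_R(\tilde B,R)\cong\tilde B$ is in place, everything else is formal.
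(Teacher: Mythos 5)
Your proof is correct, and at the level of strategy it coincides with the paper's: both link $X_0$ to $Y_0$ inside the Artinian complete intersection $Z_0=\Spec B_0$ and then lift the whole liaison diagram. The difference lies in what is proved versus what is cited. For the Gorenstein claim the paper uses the double-annihilator property of the Gorenstein ring $B_0$ to get $\omega_{C_0}\simeq\Hom_{B_0}(B_0/J,B_0)=(0:_{B_0}J)=fB_0\simeq B_0/J=C_0$ with $J=(0:_{B_0}f)$, while you compute the socle of $C_0$ directly via multiplication by $f$; both arguments are sound and of comparable length. For the lifting claim the paper constructs $Y\subset Z\subset P$ exactly as you construct $\tilde C=\tilde B/\tilde J$ with $\tilde B$ a flat complete-intersection lift of $B_0$ whose defining equations are taken from the ideal of the lift of $Y_0$ (the same use of Lemma \ref{Hartshorne} and of Lemma \ref{Matsumura}/Corollary \ref{regular-reduction}), but then simply cites \cite[Exercise 9.4]{Ha} for the assertion that the linked scheme $\Spec\bigl(\tilde B/(0:_{\tilde B}\tilde J)\bigr)$ is a flat lift of $X_0$. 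You prove that assertion: the identification $\Hom_{\tilde B}(-,\tilde B)\cong\Hom_R(-,R)$, its exactness on $R$-free modules, and the containment-plus-dimension argument identifying the special fibre with $B_0/fB_0$ are precisely the content of Hartshorne's exercise, and your write-up of them is correct. The one step you assert without proof is that $\Hom_R(\tilde B,R)$ is $\tilde B$-free of rank one; this is standard and, in your setting, follows by lifting a $B_0$-module generator of $\Hom_k(B_0,k)\cong B_0$ through Nakayama's lemma and comparing $R$-lengths of free modules. Your standing assumption that $f\neq 0$ in $B_0$ only excludes the degenerate case $Y_0=\emptyset$, where both assertions are trivial.
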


\begin{proof}
We have an exact sequence of $B_0$-modules
$$0\longrightarrow J\longrightarrow B_0\stackrel{\cdot f}{\longrightarrow} B_0\longrightarrow B_0/fB_0\longrightarrow 0,$$
where $J=(0:_{B_0}f)$. Since $B_0$ is $0$-dimensional and Gorenstein, we
have $(0:_{B_0}J)=fB_0\simeq B_0/J$. Therefore $\omega_{B_0/J}=\Hom _{B_0} (B_0/J,
B_0)\simeq B_0/J$ and $C_0=B_0/J$ is Gorenstein.

Let us set $Z_0=\Spec B_0$ and let $P_0$ be the spectrum of the
localization of $S$ at the maximal ideal $(x_0,...,x_n)$. Then $Z_0$
is a $0$-dimensional complete intersection $k$-subscheme of $P_0$
and $Y _0\subset Z_0$ is a Gorenstein $k$-subscheme linked to $X_0$ by
$Z_0$.

Assume that $Y_0$ is liftable to $R\to k$. By Lemma \ref{Hartshorne}
there exists a lifting $Y\subset P$ of $Y_0\subset P_0$
to $R\to k$, where $P$ is the localization of
$R[x_0,..,x_n]$ at the maximal ideal lying over $(x_0,...,x_n)$.
Then there exists a lifting $Z$ of $Z_0$ to $R\to k$ that
contains $Y$ (see \cite[Exercise 9.4]{Ha}).  In fact, by
Lemma \ref{Matsumura} one can take as $Z$ a subscheme of
$P$ cut out by some lifts of generators of the ideal of $Z_0$
in $P_0$ taken from the ideal of $Y$ in $P$. But then
the scheme $X$, linked to $Y$ by $Z$, is a
lifting of $X_0$ to $R\to k$.
\end{proof}

\medskip

\begin{corollary}\label{main-corollary}
  Let $R$ be a noetherian local ring with residue field $k$ of characteristic
  $p>0$. If $pR\ne 0$ then there exists a zero-dimensional Gorenstein
  $k$-scheme $Z$ that cannot be lifted to $R$.
\end{corollary}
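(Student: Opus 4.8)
The plan is to combine the non-liftability of a suitable $r$th Frobenius neighbourhood (Proposition \ref{non-liftable}) with the linkage construction of Proposition \ref{non-liftable-Gorenstein}, after first replacing $R$ by a well-chosen Artinian quotient to which Proposition \ref{non-liftable} applies.

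First I would reduce to the Artinian case. Since $pR\ne 0$ the element $p$ is nonzero in $R$, while $p\in m_R$ because $k$ has characteristic $p$. By Krull's intersection theorem $\bigcap_{j\ge 1}m_R^{\,j}=0$, so there is some $c\ge 1$ with $p\notin m_R^{\,c}$. Put $\bar R=R/m_R^{\,c}$, a local Artin ring with residue field $k$ satisfying $\bar m^{\,c}=0$ and $p\bar R\ne 0$. Choose $r\ge 1$ with $q:=p^r\ge c$, so that $\bar m^{\,q}=0$. The role of this step is that a flat lifting of any $k$-scheme to $R\to k$ base-changes along $R\to\bar R$ to a flat lifting to $\bar R\to k$; hence it suffices to exhibit a $0$-dimensional Gorenstein scheme not liftable to $\bar R\to k$.

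Next I would apply Proposition \ref{non-liftable} to $\bar R$. With $n=3q$ and $f_0=x_1\cdots x_q+x_{q+1}\cdots x_{2q}+x_{2q+1}\cdots x_{3q}$, the ring $A_0=k[x_1,\ldots,x_n]/(x_1^{q},\ldots,x_n^{q},f_0)$ is not liftable to $\bar R\to k$, since $\bar R$ satisfies $p\bar R\ne 0$ and $\bar m^{\,q}=0$. I would then pass to a Gorenstein scheme by linkage over the Artin ring $\bar R$: writing $B_0=k[x_1,\ldots,x_n]/(x_1^{q},\ldots,x_n^{q})$ we have $A_0=B_0/f_0B_0$, and $B_0$ is a $0$-dimensional Gorenstein complete intersection. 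Since the proof of Proposition \ref{non-liftable-Gorenstein} uses only this property of $B_0$, it applies verbatim with $q$ in place of $p$ and yields a $0$-dimensional Gorenstein scheme $Z=\Spec\bigl(B_0/(0:_{B_0}f_0)\bigr)$ such that liftability of $Z$ to $\bar R\to k$ forces liftability of $X_0=\Spec A_0$. As $X_0$ is not liftable to $\bar R$, neither is $Z$; and by the base-change observation $Z$ is then not liftable to $R\to k$, as required.

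I would flag two delicate points. The first is the reduction step: one must cut $R$ down to an Artinian quotient that is nilpotent of the right order $q=p^r$ while keeping $p$ nonzero, and it is precisely Krull's intersection theorem that guarantees such a $c$ (equivalently $p\notin m_R^{\,c}$) exists for an arbitrary noetherian local $R$ with $pR\ne 0$. The second is purely bookkeeping: Proposition \ref{non-liftable} produces exponents $q=p^r$, whereas Proposition \ref{non-liftable-Gorenstein} is stated with exponent $p$; since the linkage argument needs only that $B_0$ is a $0$-dimensional Gorenstein complete intersection, no new idea is required, but one should record this mild generalization rather than invoke the proposition as literally stated.
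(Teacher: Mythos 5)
Your proof is correct and has the same two-step architecture as the paper's: pass to a suitable Artinian quotient of $R$, then combine Proposition \ref{non-liftable} with the linkage construction of Proposition \ref{non-liftable-Gorenstein}. The genuine difference is in the reduction step. The paper asserts, citing Krull's intersection theorem, that there is a smallest $e$ with $m_R^{e}\subset pR$, and then passes to $R'=R/pm_R$, for which $pR'\ne 0$ (by Nakayama) and $m_{R'}^{e+1}=0$. You instead use Krull's theorem in the form $\bigcap_n m_R^{\,n}=0$ to find $c$ with $p\notin m_R^{\,c}$ and pass to $\bar R=R/m_R^{\,c}$. Your version is the more robust one: for a noetherian local ring such as $W(k)[[t]]$ there is \emph{no} $e$ with $m_R^{e}\subset pR$, so the paper's reduction does not literally apply to every ring covered by the statement, whereas your $c$ exists whenever $pR\ne 0$; both reductions, once available, deliver an Artinian quotient with $p\ne 0$ and maximal ideal nilpotent of order at most some $q=p^r$, which is all that Proposition \ref{non-liftable} needs. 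Your second flagged point --- that Proposition \ref{non-liftable-Gorenstein} is stated with exponent $p$ but must be applied with exponents $q=p^r$ --- is also a real, if mild, discrepancy that the paper's own proof passes over silently; as you observe, the linkage argument only uses that $\Spec B_0$ is a zero-dimensional (Gorenstein) complete intersection, so the generalization is immediate. From that point on the two proofs conclude identically.
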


\begin{proof}
  Let $m_R$ be the maximal ideal of $R$. By Krull's intersection
  theorem $\bigcap m_R^n=0$, so we can find the smallest positive
  integer $e$ such that $m_R^{e}\subset pR$. If a $k$-scheme is
  liftable to $R$ then it is also liftable to $R'=R/pm_R$. Let us set
  $m_{R'}=m_RR'$. Then $pm_{R'}=0$ and $m_{R'}^{e}\subset pR'$, so
  $m_{R'}^{e+1}=0$. Moreover, we have $pR'\ne 0$. Indeed, if $pR'=0$
  then $pR=pm_R$, so by Nakayama's lemma $pR=0$, a contradiction.  Now
  the required assertion follows from Propositions \ref{non-liftable}
  and \ref{non-liftable-Gorenstein}.
\end{proof}

\subsection*{Acknowledgements}

The author would like to thank P. Achinger, J. Jelisiejew and M. Zdanowicz for
useful comments on earlier versions of the paper.


\begin{thebibliography}{Ill}

\bibitem{Am} American Institute of Mathematics Problem List. Components of Hilbert schemes, available
at http://aimpl.org/hilbertschemes, 2010.



\bibitem {BO} P. Berthelot, A. Ogus, Notes on crystalline
  cohomology. Princeton University Press, Princeton, N.J.; University
  of Tokyo Press, Tokyo, 1978. vi+243 pp.

\bibitem {Fi} N. J. Fine, Binomial coefficients modulo a prime, \emph{Amer. Math. Monthly} 
{\bf 54} (1947), 589--592.


\bibitem {Ha} R. Hartshorne, Deformation theory. \emph{Graduate
Texts in Mathematics} {\bf 257}. Springer, New York, 2010.
viii+234 pp.



\bibitem{Ma} H. Matsumura, Commutative ring theory, Translated from
  the Japanese by M. Reid.  Second edition. \emph{Cambridge Studies in
    Advanced Mathematics} {\bf 8}.  Cambridge University Press,
  Cambridge, 1989. xiv+320 pp.


\bibitem{Stacks} Stacks project, {http://stacks.math.columbia.edu/}, 2017.

\bibitem{Va} R. Vakil, Murphy's law in algebraic geometry: badly-behaved deformation spaces.
\emph{Invent. Math.} {\bf 164} (2006),  569--590.

\bibitem {Zd} M. Zdanowicz, Liftability of singularities and their
  Frobenius morphism modulo $p^2$, \emph{Int. Math. Res. Not.} (2017),
  DOI: https://doi.org/10.1093/imrn/rnw297.



\end{thebibliography}
\end{document}